\newtheorem{theorem}{Theorem}[section]
\newtheorem{lemma}[theorem]{Lemma}
\newtheorem{corollary}[theorem]{Corollary}
\newtheorem{defi}[theorem]{Definition}
\newtheorem{rem}[theorem]{Remark}
\newcommand{\R}{\mathbb{R}}
\newcommand{\Z}{\mathbb{Z}}
\newcommand{\al}{\alpha}
\newcommand{\ep}{\varepsilon}
\newcommand{\pa}{\partial}
\newenvironment{proof}{\begin{trivlist} \item[] {\em Proof:}}{\hfill $\Box$
                       \end{trivlist}}
\newenvironment{proofthm}[1]{\begin{trivlist} \item[] {\em Proof of Theorem \ref{#1}:}}{\hfill $\Box$
                       \end{trivlist}}
\renewcommand*\l@section{\@dottedtocline{1}{0em}{1.5em}}
\renewcommand*\l@subsection{\@dottedtocline{2}{1.5em}{2.3em}}
\renewcommand*\l@subsubsection{\@dottedtocline{3}{3.8em}{3.7em}}
\numberwithin{equation}{section}
\begin{document}

\title{Uniformly rotating analytic global patch solutions for active scalars}

\author{Angel Castro, Diego C\'ordoba and Javier G\'omez-Serrano}

\maketitle

\begin{abstract}

We show that there exists a family of analytic convex global rotating solutions for the vortex patch equations, bifurcating from ellipses. As a byproduct, the analyticity proof can also be adapted to the rotating patch solutions bifurcating from disks (also known as V-states) for both the Euler and the generalized surface quasi-geostrophic equation. \\

\vskip 0.3cm
\textit{Keywords: bifurcation theory, analyticity, Crandall-Rabinowitz, V-states, patches, surface quasi-geostrophic, Euler}

\end{abstract}

%\tableofcontents

\section{Introduction}

Our goal in this article is to show the existence of a new family of uniformly rotating global solutions of the vortex patch equation. We start with the 2D Euler equations in vorticity form:

\begin{align*}
\omega_t + u \cdot \nabla \omega  & = 0 , \quad (x,t) \in \mathbb{R}^{2} \times \mathbb{R} \\
u(x,t) & = -\nabla^{\perp}(-\Delta)^{-1} \omega,
\end{align*}

and we will consider weak solutions in the form of a patch, that is, solutions for which $\omega$ is
 a step function:

  \begin{align}
  \omega(x,t) =
  \left\{
 \begin{array}{ll}
   \omega_1, \text{ if } \ \ x \in \Omega(t) \\ %DEFINIR OMEGA
   \omega_2, \text{ if }  \ \ x \in \Omega(t) ^c, \\
   \end{array}
  \right.
\end{align}

 where $\Omega(0)$ is given by the initial distribution of $\omega$, $\omega_1$ and $\omega_2$ are constants, and $\Omega(t)$ is the evolution of $\Omega(0)$ under the velocity field $u$. The problem can be reduced to an evolution equation for $\partial \Omega(t)$.

Yudovich proved the global existence and uniqueness of weak solutions in $L^{1} \cap L^{\infty}$ of the 2D Euler in vorticity formulation \cite{Yudovich:Nonstationary-ideal-incompressible}. Chemin, in \cite{Chemin:persistance-structures-fluides-incompressibles} showed by means of paradifferential calculus the preservation of $\mathcal{C}^{1, \gamma}$ regularity of the boundary of the patch $\pa \Omega(t)$. Another proof of that result, highlighting the extra cancellation on semi spheres of even kernels, can be found in \cite{Bertozzi-Constantin:global-regularity-vortex-patches} by Bertozzi and Constantin. Serfati, in \cite{Serfati:preuve-directe-existence-globale-patches} provided another one, giving a fuller characterization of the velocity gradient's regularity.

  In recent years, Denisov has studied the process of merging for the vortex patch problem. In this case, the collapse in a point can not happen in finite time since the distance between the two patches can decay at most as fast as a double exponential. Denisov proves in \cite{Denisov:sharp-corner-euler-patches} that this bound is sharp if one is allowed to modify slightly the velocity by superimposing a smooth background incompressible flow. See also \cite{Denisov:centrally-symmetric-vstates-model}.

However, there is a family of global solutions that move with constant, both in time and space, angular velocity, called V-states. Deem and Zabusky in \cite{Deem-Zabusky:vortex-waves-stationary} were the first to compute them numerically, and later other authors have improved the methods and numerically computed a bigger class (see \cite{Wu-Overman-Zabusky:steady-state-Euler-2d,Elcrat-Fornberg-Miller:stability-vortices-cylinder,LuzzattoFegiz-Williamson:efficient-numerical-method-steady-uniform-vortices,Saffman-Szeto:equilibrium-shapes-equal-uniform-vortices} for a small sample of them).

Concerning proofs of the existence and regularity of V-states, Burbea \cite{Burbea:motions-vortex-patches} used a conformal mapping and bifurcation theory to show their existence. Hmidi, Mateu and Verdera in \cite{Hmidi-Mateu-Verdera:rotating-vortex-patch} showed that the family of V-states has $C^{\infty}$ boundary regularity and is convex. In another paper \cite{Hmidi-delaHoz-Mateu-Verdera:doubly-connected-vstates-euler}, they studied the V-state existence for the case of doubly connected domains. See also \cite{Hmidi-Mateu-Verdera:rotating-doubly-connected-vortices}.

It is known since Kirchhoff \cite{Kirchhoff:vorlesungen-math-physik} that ellipses are a family of rotating solutions for the vortex patch equations. More precisely, an ellipse of semiaxes $a$ and $b$, rotates with uniform velocity $\Omega = \frac{ab}{(a+b)^{2}}$. Love \cite{Love:stability-ellipses} established linear stability for ellipses of aspect ratio bigger than $\frac{1}{3}$ and linear instability for ellipses of aspect ratio smaller than $\frac{1}{3}$. Most of the efforts have been devoted to establish nonlinear stability and instability in the range predicted by the linear part. Wan \cite{Wan:stability-rotating-vortex-patches}, and Tang \cite{Tang:nonlinear-stability-vortex-patches} proved the nonlinear stable case, whereas Guo et al. \cite{Guo-Hallstrom-Spirn:dynamics-unstable-kirchhoff-ellipse} settled the nonlinear unstable one. See also \cite{Constantin-Titi:evolution-nearly-circular-vortex-patches}.

Our first theorem shows the existence of a family of analytic convex global rotating solutions that bifurcate from ellipses of certain aspect ratios. Kamm, in his PhD thesis \cite{Kamm:thesis-shape-stability-patches}, computed numerically the first 3 bifurcations and the first 3 values of the aspect ratio at which the bifurcations occur. His results agree perfectly with our analytic formula. We remark that all of the aspect ratios (other than the first one - which is $\frac{1}{3}$ -) from which we bifurcate are smaller than $\frac{1}{3}$ and lie in the unstable zone (see Lemma \ref{lemaraices}). A small sample of other numerical studies of ellipses, either perturbed or unperturbed, or concerning their stability are  \cite{Dritschel:stability-energetics-corotating-vortices,Dritchel:nonlinear-evolution-rotating-patches,Mitchell-Rossi:evolution-kirchhoff-elliptic-vortices,Cerretelli-Williamson:new-family-vortices}.

%The evolution of Kirchhoff elliptic vortices, Mitchell Rossi, Physics of Fluids.
%D. G. Dritschel, “The stability and energetics of corotating uniform vortices,” J. Fluid Mech.157,95 1985
%D. G. Dritschel, “The nonlinear evolution of rotating configurations of uniform vorticity,” J. Fluid Mech.172, 157 1986. \cite{Dritchel:nonlinear-evolution-rotating-patches}
%C. CERRETELLI a1 and C. H. K. WILLIAMSON A new family of uniform vortices related to vortex configurations before merging \cite{Cerretelli-Williamson:new-family-vortices}

The existence result, in H\"older spaces, was announced by Hmidi in a talk given at ICMAT in May 2014 and recently appeared in \cite{Hmidi-Mateu:bifurcation-kirchhoff-ellipses}.

 The evolution equation for the interface of a vortex patch, which we parametrize as a $2 \pi$ periodic curve $z(x)$, can be written as
 \begin{align}
\label{Ecuacion-vortex-patch}
 \partial_t z(x,t) = \frac{\omega_2 - \omega_1}{4\pi} \int_{0} ^{ 2 \pi} (\partial_x z (x,t) - \partial_x z(x-y,t) )\log(|z(x,t)-z(x-y,t)|^{2}) dy,
 \end{align}

 since we can add terms in the tangential direction without changing the evolution of the patch.

We will also consider the generalized surface-quasigeostrophic equation (gSQG):
\begin{align*}
\left\{ \begin{array}{ll}
\partial_{t}\theta+u\cdot\nabla\theta=0,\quad(t,x)\in\mathbb{R}_+\times\mathbb{R}^2, &\\
u=-\nabla^\perp(-\Delta)^{-1+\frac{\alpha}{2}}\theta,\\
\theta_{|t=0}=\theta_0,
\end{array} \right.
\end{align*}

where $\alpha \in (0,2)$. The case $\alpha = 1$ corresponds to the surface quasi-geostrophic (SQG) equation and the limiting case $\alpha = 0$ refers to the 2D incompressible Euler equation discussed before. $\alpha = 2$ produces stationary solutions. As before, we will work in the patch setting.

%
%
%Motivated by the articles of Constantin et al. \cite{Constantin-Majda-Tabak:formation-fronts-qg} and Held et al. \cite{Held-Pierrehumbert-Garner-Swanson:sqg-dynamics}, a lot of effort has been devoted to understanding these equations for the SQG $(\al = 1)$ case. More generally, the problem of whether the gSQG system presents global solutions or not is not completely understood.
%
%  The existence of global weak solutions in $L^2$ for the case $\al = 1$ (SQG) was proven by Resnick in \cite{Resnick:phd-thesis-sqg-chicago} using an extra cancellation due to the oddness of the Riesz transform and it was extended to the gSQG case in \cite{Chae-Constantin-Cordoba-Gancedo-Wu:gsqg-singular-velocities} by Chae et al., even though the question of non-uniqueness is still open (see \cite{Isett-Vicol:holder-continuous-active-scalar} and references therein). A one-dimensional model of the gSQG equations was studied by Dong and Li in \cite{Dong-Li:one-dimensional-alpha-patch}.
%
%

%  The evolution of such distribution is completely determined by the evolution of the boundary, allowing the problem to be treated as a non-local one dimensional equation for the contour of $\Omega(t)$. 
  
  In this setting, local existence of $C^{\infty}$ solutions was proved by Rodrigo in \cite{Rodrigo:evolution-sharp-fronts-qg} for $0 < \al \leq 1$ and for Sobolev regularity by Gancedo in \cite{Gancedo:existence-alpha-patch-sobolev} for $0 < \al \leq 1$ and for $1 < \al < 2$ by Chae et al. in \cite{Chae-Constantin-Cordoba-Gancedo-Wu:gsqg-singular-velocities}.

Looking for candidates for singularities, there have been several numerical experiments in that direction. The evolution of two colliding patches was studied in \cite{Cordoba-Fontelos-Mancho-Rodrigo:evidence-singularities-contour-dynamics} by C\'ordoba et al. for a broad spectrum of $\alpha$. They exhibit numerical evidence suggesting an asymptotically self-similar singular scenario in which the distance between both patches goes to zero in finite time while simultaneously the curvature of the boundaries blows up. This does not contradict the theorem of Gancedo and Strain \cite{Gancedo-Strain:absence-splash-muskat-SQG} who proved that no splash singularity can be formed, i.e., two interfaces can not collapse in a point, if the interfaces remain smooth.  Another possible scenario for singularities is the one presented by Scott and Dritschel \cite{Scott-Dritschel:self-similar-sqg}, where taking as initial condition an elliptical patch with a big aspect ratio between its axes may develop a self-similar singularity with curvature blowup for the case $\al = 1$. Castro et al. (\cite{Castro-Cordoba-GomezSerrano-MartinZamora:remarks-geometric-properties-sqg}) proved that, unlike in the vortex patch, elliptical patches are not rotating solutions for $\al > 0$, as well as the existence of convex solutions that lose their convexity in finite time. Scott \cite{Scott:scenario-singularity-quasigeostrophic} pointed out that small perturbations of thin strips may lead to a self similar cascade of instabilities, leading to a possible arc chord blow up.

In \cite{Hassainia-Hmidi:v-states-generalized-sqg} and \cite{delaHoz-Hassainia-Hmidi:doubly-connected-vstates-gsqg}, the authors showed the existence of V-states for $0 < \al < 1$, in the simply connected and doubly connected case respectively, leaving open the regularity of the solutions. This question was addressed in the simply connected case in \cite{Castro-Cordoba-GomezSerrano:existence-regularity-gsqg} for the full range $0 < \al < 2$, proving existence of solutions for $1 \leq \al < 2$ and $C^{\infty}$ regularity of the boundary for $0 < \al < 2$.

The second theorem of this paper shows that using the same techniques as for the vortex patch equation, one can adapt the spaces to get existence of analytic V-states, improving thus the $C^{\infty}$ regularity result from \cite{Hmidi-Mateu-Verdera:rotating-vortex-patch,Castro-Cordoba-GomezSerrano:existence-regularity-gsqg}.

 The evolution equation for the interface of an $\alpha-$ patch, which we parametrize as a $2 \pi$ periodic curve $z(x)$, can be written as
 \begin{align}
\label{Ecuacion-alpha-patch}
 \partial_t z(x,t) = -(\theta_2 - \theta_1)C(\al) \int_{0} ^{ 2 \pi} \frac{ \partial_x z (x,t) - \partial_x z(x-y,t) }{ \vert z(x,t) - z(x-y, t) \vert^{\alpha}} dy,
 \end{align}

where the normalizing constant $C(\al)$ is given by:

\begin{align*}
 C(\al) = \frac{1}{2\pi} \frac{\Gamma\left(\frac{\al}{2}\right)}{2^{1-\al}\Gamma\left(\frac{2-\al}{2}\right)}.
\end{align*}

\subsection{The contour equations}

Let $z(x,t) = (z_1(x,t),z_2(x,t))$ be the interface of the patch. We will derive the contour equation in two different settings: one bifurcating from ellipses and the vortex patch equation, the other one bifurcating from disks for the Euler and generalized SQG equation. %The main difference between the two is that in the former setting we will use elliptical coordinates to parametrize the boundary of the patch, and in the latter we will use radial coordinates. The main feature of both approaches is that we will be bifurcating from constant functions in both settings.

Let us assume that $z(x,t)$ rotates with frequency $\Omega$ counterclockwise. Thus

\begin{align*}
z_t(x,t) = \Omega z^{\perp}(x,t),
\end{align*}

where for every $v = (v_1, v_2)$, $v^{\perp}$ is defined as $(-v_2,v_1)$. The equations such patch satisfies are

\begin{align*}
z_t(x,t) \cdot n & = u(z(x,t),t) \cdot n \\
\Omega \langle z^{\perp}(x,t), z_{x}^{\perp}(x,t) \rangle  = \Omega \langle z(x,t), z_{x}(x,t) \rangle & = \langle u(z(x,t),t), z_{x}^{\perp}(x,t) \rangle.
\end{align*}

Here, $n$ is the unitary normal vector and the tangential component of the velocity does not change the shape of the curve. Let us now deal with the elliptic case and parametrize $z(x,t)$ as

\begin{align*}
\left(
\begin{array}{c}
z_1(x,t) \\
z_2(x,t)
\end{array}
\right)
= 
\left(
\begin{array}{cc}
\cos(\Omega t) & \sin(\Omega t) \\
-\sin(\Omega t) & \cos(\Omega t)
\end{array}
\right)
\left(
\begin{array}{c}
(1 + R(x))\cos(x) \\
(r + R(x))\sin(x)
\end{array}
\right)
\end{align*}

%\begin{align*}
% z_1(x,t) = (1 + R(x))\cos(x); \quad z_2(x,t) = (r + R(x))\sin(x),
%\end{align*}

where $r < 1$. We remark that the case $R(x) \equiv 0$ corresponds to an ellipse of semiaxes $1$ and $r$. We will set $\Omega = \frac{r}{(1+r)^{2}}$, so that $r$ becomes our bifurcation parameter. Thus, the question of finding a rotating global solution is reduced to find a zero of $F(r,R)$, where

\begin{align}
\label{funcionalelipses14}
F(r,R) & = \frac{r}{(1+r)^2}\left(\frac{r^2-1}{2}\sin(2x) + R'(x)(\cos^2(x)  + r \sin^{2}(x)) + (r-1)\sin(2x)R(x) + R(x)R'(x)\right) \nonumber \\
& + \frac{1}{4\pi}\sum_{i=1}^{2} F_{i}(r,R),
\end{align}

and the $F_{i}$ are

%Log[(Cos[x] + Cos[x] R[x] - Cos[y] (1 + R[y]))^2 + ((r + R[x]) Sin[x] - (r + R[y]) Sin[y])^2] 
%((Cos[x] (r + R[x]) + Sin[x] Derivative[1][R][x]) (Sin[y] + R[y] Sin[y] - Cos[y] Derivative[1][R][y]) - (Sin[x] + R[x] Sin[x] - Cos[x] Derivative[1][R][x]) (Cos[y] (r + R[y]) + Sin[y] Derivative[1][R][y]))

\begin{align*}
F_1(r,R) & = \int \log((\cos(x)-\cos(y) + R(x)\cos(x) - R(y)\cos(y))^{2} + (r(\sin(x)-\sin(y)) + R(x)\sin(x) - R(y)\sin(y))^{2})
 \\
& \times (\cos(x)(r+R(x))+\sin(x)R'(x))(\sin(y) + R(y)\sin(y) - \cos(y)R'(y)) dy\\
F_2(r,R) & = -\int \log((\cos(x)-\cos(y) + R(x)\cos(x) - R(y)\cos(y))^{2} + (r(\sin(x)-\sin(y)) + R(x)\sin(x) - R(y)\sin(y))^{2})
 \\
& \times (\sin(x)+R(x)\sin(x)-\cos(x)R'(x))(\cos(y)(r+R(y))+\sin(y)R'(y)) dy\\
\end{align*}

where the above integrals are performed on the torus. We remark that $F(r,0) \equiv 0$ for every $r$ (see \cite{Kirchhoff:vorlesungen-math-physik} and Section \ref{sectionpaso2elipses}). 

In the gSQG case, we will perturb the  disk by looking for solutions of the type:

\begin{align*}
\left(
\begin{array}{c}
z_1(x,t) \\
z_2(x,t)
\end{array}
\right)
= 
\left(
\begin{array}{cc}
\cos(\Omega t) & \sin(\Omega t) \\
-\sin(\Omega t) & \cos(\Omega t)
\end{array}
\right)
\left(
\begin{array}{c}
(1 + R(x))\cos(x) \\
(1 + R(x))\sin(x)
\end{array}
\right)
\end{align*}

Then, the question of finding a rotating global solution patch is reduced to find a zero of $F(\Omega,R)$, where

\begin{align}
\label{funcionalvstatesdiscos}
F(\Omega,R) = \Omega R'(x) - \sum_{i=1}^{3} F_{i}(R),
\end{align}

and the $F_{i}$ are

\begin{align}
F_1(R)=&\frac{1}{R(x)}C(\al)\int\frac{\sin(x-y)}{\left(\left(R(x)-R(y)\right)^2+4R(x)R(y)\sin^2\left(\frac{x-y}{2}\right)\right)^\frac{\al}{2}}
\left(R(x)R(y)+R'(x)R'(y)\right)dy,\\
F_2(R)=&C(\al)\int\frac{\cos(x-y)}{\left(\left(R(x)-R(y)\right)^2+4R(x)R(y)\sin^2\left(\frac{x-y}{2}\right)\right)^\frac{\al}{2}}
\left(R'(y)-R'(x)\right)dy,\\
F_3(R)=&\frac{R'(x)}{R(x)}C(\al)\int\frac{\cos(x-y)}{\left(\left(R(x)-R(y)\right)^2+4R(x)R(y)\sin^2\left(\frac{x-y}{2}\right)\right)^\frac{\al}{2}}
\left(R(x)-R(y)\right)dy,
\end{align}

For simplicity, from now on we will omit writing the domain of integration, which is always the torus.

%Quiza decir que las bifurcaciones de la elipse son bifurcaciones secundarias si vemos a la elipse como una bifurcacion primaria (2-fold) del disco.

\subsection{Functional spaces}

We refer to the space of analytic functions in the strip $|\Im(z)| \leq c$ as $\mathcal{C}_{w}(c)$. In our proofs, we will use the following analytic spaces. For $k \in \mathbb{Z}$:

\begin{align*}
X^{k}_{c} & = \left\{f(x) \in \mathcal{C}_{w}(c), f(x) = \sum_{j=1}^{\infty}a_{j} \cos(jx), \sum_{\pm}\int |f(x \pm ic)|^{2}dx + \sum_{\pm}\int |\pa^{k} f(x\pm ic)|^{2}dx < \infty\right\} \\
X^{k,odd}_{c} & = \left\{f(x) \in \mathcal{C}_{w}(c), f(x) = \sum_{j \geq 1, j \text{ odd }}^{\infty}a_{j} \cos(jx), \sum_{\pm}\int |f(x \pm ic)|^{2}dx + \sum_{\pm}\int |\pa^{k} f(x\pm ic)|^{2}dx < \infty\right\} \\
X^{k,even}_{c} & = \left\{f(x) \in \mathcal{C}_{w}(c), f(x) = \sum_{j \geq 2, j \text{ even }}^{\infty}a_{j} \cos(jx), \sum_{\pm}\int |f(x \pm ic)|^{2}dx + \sum_{\pm}\int |\pa^{k} f(x\pm ic)|^{2}dx < \infty\right\} \\
Y^{k}_{c} & = \left\{f(x) \in \mathcal{C}_{w}(c), f(x) = \sum_{j=1}^{\infty}a_{j} \sin(jx), \sum_{\pm}\int |f(x \pm ic)|^{2}dx + \sum_{\pm}\int |\pa^{k} f(x\pm ic)|^{2}dx < \infty\right\} \\
Y^{k,odd}_{c} & = \left\{f(x) \in \mathcal{C}_{w}(c), f(x) = \sum_{j \geq 1, j \text{ odd }}^{\infty}a_{j} \sin(jx), \sum_{\pm}\int |f(x \pm ic)|^{2}dx + \sum_{\pm}\int |\pa^{k} f(x\pm ic)|^{2}dx < \infty\right\} \\
Y^{k,even}_{c} & = \left\{f(x) \in \mathcal{C}_{w}(c), f(x) = \sum_{j \geq 2, j \text{ even }}^{\infty}a_{j} \sin(jx), \sum_{\pm}\int |f(x \pm ic)|^{2}dx + \sum_{\pm}\int |\pa^{k} f(x\pm ic)|^{2}dx < \infty\right\} \\
X^{k+\al}_{c} & = \left\{f(x) \in \mathcal{C}_{w}(c), f(x) = \sum_{j=1}^{\infty}a_{j} \cos(jx), \sum_{\pm}\int |f(x \pm ic)|^{2}dx + \sum_{\pm}\int |\pa^{k} f(x\pm ic)|^{2}dx \right. \\
& + \left.\sum_{\pm} \left\|\int_{\mathbb{T}} \frac{\pa^{k}f(x\pm ic -y)-\pa^{k}f(x \pm ic)}{\left|\sin\left(\frac{y}{2}\right)\right|^{1+\al}}dy\right\|_{L^2(x)} < \infty\right\}, \quad \al \in (0,1) \\
X^{k+\log}_{c} & = \left\{f(x) \in \mathcal{C}_{w}(c), f \in X^{k}_{c}, f(x) = \sum_{j=1}^{\infty}a_{j} \cos(jx), \sum_{\pm} \left\|\int_{\mathbb{T}} \frac{\pa^{k}f(x\pm ic -y)-\pa^{k}f(x \pm ic)}{\left(|\sin\left(\frac{y}{2}\right)\right|}dy\right\|_{L^2(x)} < \infty\right\} \\
X^{k+\log,m}_{c} & = \left\{f(x) \in \mathcal{C}_{w}(c), f \in X^{k}_{c}, f(x) = \sum_{j=1}^{\infty}a_{jm} \cos(jmx), \sum_{\pm} \left\|\int_{\mathbb{T}} \frac{\pa^{k}f(x \pm ic -y)-\pa^{k}f(x \pm ic)}{\left|\sin\left(\frac{y}{2}\right)\right|}dy\right\|_{L^2(x)} < \infty\right\}.
\end{align*}

The norm is given in the last two cases by the sum of the $X^{k}_{c}$-norm and the additional finite integral in the definition. %We give an alternative characterization of the $X^{k+\log}$ spaces.  This will be useful for the spectral study.

%[Quiza citar la caracterizacion de los espacios log del otro paper, no se si nos sera util]

\subsection{Theorems and outline of the proofs}

 The paper is organized as follows:

In Section \ref{sectionelipses}, we prove the following theorem:

\begin{theorem}
\label{teoremaexistenciaelipses}
 Let $k\geq 3, m \in \mathbb{N}, m > 2$ and let $0 < r(m) < 1$ be the (unique) positive solution of

 \begin{align*} -1 - 2 r + 2 m r - r^2 - \frac{(1-r)^{m}}{(1+r)^{m-2}} = 0.\end{align*}

Then, there exists a family of solutions $(r,R)$ and a $c > 0$, where $R(x) \in X^{k,even}_{c}$ or $R(x) \in X^{k,odd}_{c}$ if $m$ is even or odd resp., of the equation \eqref{funcionalelipses14} that bifurcate from the ellipse of semiaxes $1$ and $r(m)$.
\end{theorem}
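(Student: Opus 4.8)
The natural strategy is the analytic Crandall--Rabinowitz bifurcation theorem applied to the functional $F$ of \eqref{funcionalelipses14}, using the ellipses $\{(r,0)\}$ as the trivial branch (recall $F(r,0)\equiv 0$). I would first fix the functional-analytic setting by regarding $F$ as a map
\[
F:\;(0,1)\times X^{k,\mathrm{even}}_{c}\longrightarrow Y^{k-1,\mathrm{even}}_{c}
\]
(and the analogous odd version), the choice of even- versus odd-mode subspace being dictated by the parity of $m$. That $F$ lands in the sine spaces $Y_c$ and preserves the mode parity of $R$ follows by inspecting the explicit terms: $R'$, $\sin(2x)R$, $RR'$ and each $F_i$ produce sine series with the same parity as $R$. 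Restricting to the even (resp.\ odd) class when $m$ is even (resp.\ odd) keeps $\cos(mx)$ in the domain while discarding the degeneracies tied to the obvious invariances of the problem (translations and the ellipse family, corresponding to the low modes $1$ and $2$), which is what ultimately makes the kernel one-dimensional.

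The first and, I expect, most demanding task is to prove that $F$ is a well-defined $C^{1}$ (indeed real-analytic) map between these strip-analytic spaces and to compute the derivatives $\partial_R F$, $\partial_r F$, $\partial_r\partial_R F$. The obstruction is that the kernel $\log(|z(x)-z(x-y)|^{2})$ is not holomorphic, so after complexifying $x\mapsto x\pm ic$ one must replace $|z(x)-z(x-y)|^{2}$ by the holomorphic quadratic form $(z_1(x)-z_1(x-y))^{2}+(z_2(x)-z_2(x-y))^{2}$ and show that, for $R$ small in $X^{k}_{c}$ and $c$ small, it stays away from $(-\infty,0]$, so that the complex logarithm is holomorphic throughout the strip. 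One then estimates the resulting integral operator in the norms of $X^{k}_{c}$ and $Y^{k-1}_{c}$, tracking the single derivative loss and controlling the mild (logarithmic) singularity at $y=0$; together with the analyticity of products and compositions on $X^{k}_{c}$ this yields the required smoothness. This is the step where the analytic spaces, rather than the $C^{\infty}$ ones of earlier work, are essential.

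The second delicate point is the spectral study of $L_r:=\partial_R F(r,0)$. Unlike the disk case, the ellipse breaks rotational symmetry, and a direct computation shows that already the local (non-integral) part of $F$ sends $\cos(mx)$ to a combination of $\sin(mx)$ and $\sin((m\pm2)x)$; after evaluating the Kirchhoff integrals one finds that $L_r$ acts as a band operator coupling the modes $m$ and $m\pm2$ within the even/odd class. I would therefore translate $L_r R=0$ into a three-term recurrence for the Fourier coefficients of $R$ and impose the exponential decay forced by membership in $X^{k}_{c}$; the decaying solution of the recurrence behaves like $\big(\tfrac{1-r}{1+r}\big)^{j}$ (the conformal modulus of the ellipse), and compatibility of this decay with the bottom of the lattice yields precisely the solvability condition
\[
-1-2r+2mr-r^2-\frac{(1-r)^{m}}{(1+r)^{m-2}}=0.
\]
By Lemma \ref{lemaraices} this equation has a unique, and simple, root $r(m)\in(0,1)$ for $m>2$, and no other mode in the chosen parity class lies in the kernel, so $\ker L_{r(m)}$ is one-dimensional.

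Finally, I would verify the remaining Crandall--Rabinowitz hypotheses and conclude. The operator $L_{r(m)}$ is Fredholm of index zero, and since the solvability condition singles out a single mode, its range is the closed codimension-one subspace of $Y^{k-1}_{c}$ complementary to the bifurcation direction $v_0$. The transversality condition $\partial_r\partial_R F(r(m),0)v_0\notin\operatorname{Im}L_{r(m)}$ reduces, after projecting onto the missing direction, to the nonvanishing of the $r$-derivative of the left-hand side above at $r(m)$, i.e.\ to the simplicity of the root, again provided by Lemma \ref{lemaraices}. Crandall--Rabinowitz then produces a $C^{1}$ curve of nontrivial zeros $(r,R)$ of $F$ through $(r(m),0)$ with $R\in X^{k,\mathrm{even}}_{c}$ or $X^{k,\mathrm{odd}}_{c}$; since $F$ is built on the analytic strip spaces, the resulting $R$ are analytic, giving the claimed family of uniformly rotating global patches bifurcating from the ellipse of semiaxes $1$ and $r(m)$.
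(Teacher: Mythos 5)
Your overall architecture coincides with the paper's: Crandall--Rabinowitz for \eqref{funcionalelipses14} on the parity-restricted analytic strip spaces, analytic continuation of the logarithmic kernel into the strip, a band-structured linearization coupling modes $m$ and $m\pm 2$, and Lemma \ref{lemaraices} for the root $r(m)$. However, your description of the kernel analysis --- the heart of the proof --- rests on a mechanism that fails as stated. The linearization at $R=0$ is
\begin{align*}
DF(r,0)[h]=\sum_{j}K_j(r)\left(a_{j-2}-2z\,a_j+a_{j+2}\right)\sin(jx),\qquad z=\frac{1+r}{1-r},
\end{align*}
(for $j>1$), where $h=\sum_j a_j\cos(jx)$ and $K_j(r)$ is a nonvanishing multiple of the bracket $-1-2r+2jr-r^2-(1-r)^j/(1+r)^{j-2}$. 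If all rows were active, the bottom-compatible solution of the constant-coefficient recurrence would be $c_p=A\bigl(\lambda_+^p-\lambda_-^p\bigr)$ with $\lambda_\pm=z\pm\sqrt{z^2-1}$, which grows for \emph{every} $r\in(0,1)$; imposing decay forces $A=0$. So the shooting picture you propose --- ``take the decaying solution and demand compatibility with the bottom of the lattice'' --- produces no bifurcation values at all. The actual mechanism (Lemma \ref{lemanucleo}) is that all three band coefficients $x_m,y_m,z_m$ of the $m$-th row carry the \emph{common factor} $K_m(r)$, so at $r=r(m)$ the entire $m$-th row vanishes identically; this freed row is what allows the kernel element to follow the growing branch $\lambda_+^p-\lambda_-^p$ up to the bifurcating mode and switch to the decaying branch $\lambda_-^{p-m}$ afterwards. (Incidentally, the decay rate is $\lambda_-=(1-\sqrt{r})/(1+\sqrt{r})$ per step within the parity class, not $(1-r)/(1+r)$.)

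The same missing structural fact undercuts your Step 5 and Step 4 range claim. The projection of $F_{rR}(r(m),0)h_0$ onto the missing direction equals $K_m'(r(m))$ \emph{times} the defect $a_{m-2}-2z\,a_m+a_{m+2}$ of the kernel element at the freed row; simplicity of the root gives only the first factor, and you also need the second to be nonzero, which is the paper's Remark \ref{remarkfilanocero} (it holds precisely because $h_0$ must fail the recurrence at the freed row, otherwise it would solve the full recurrence and vanish). Likewise, the assertion that $L_{r(m)}$ is Fredholm of index zero with closed, codimension-one range is the right statement but not automatic in these weighted analytic spaces: the paper establishes it by explicitly inverting the semi-infinite tridiagonal system \eqref{sistemapreimagen} and using the growth $K_n(r(m))\sim n$ of Lemma \ref{lemaKn} to obtain the $\ell^2$ bounds. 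All of these points are fixable, but as written the central steps of your outline would not go through.
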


Section \ref{sectiondiscos} is devoted to prove

\begin{theorem}
\label{teoremaexistenciadiscos}
 Let $k\geq 3, m \in \mathbb{N}, m \geq 2, 0 \leq \alpha < 2$ and let

 \begin{align*} \Omega_m = 
\left\{
\begin{array}{cc}
\displaystyle \frac{m-1}{2m} & \text{ if } \alpha = 0 \\
\displaystyle \frac{2}{\pi}\sum_{k=2}^{m}\frac{1}{2k-1} & \text{ if } \alpha = 1 \\
\displaystyle -2^{\al-1} \frac{\Gamma\left(1-\al\right)}{\left(\Gamma\left(1-\frac{\al}{2}\right)\right)^{2}}\left(\frac{\Gamma\left(1+\frac{\al}{2}\right)}{\Gamma\left(2-\frac{\al}{2}\right)} - \frac{\Gamma\left(m+\frac{\al}{2}\right)}{\Gamma\left(1+m-\frac{\al}{2}\right)}\right) & \text{ if } \alpha \neq \{0, 1\}
\end{array}
\right.
\end{align*} 

Then, there exists a family of $m$-fold solutions $(\Omega,R)$ and a $c > 0$, where $R(x)-1 \in X^{k+1,m}_{c}$ (for $\al < 1$), $R(x)-1 \in X^{k+1+\log,m}_{c}$ (for $\al = 1$) or $R(x)-1 \in X^{k+\al,m}_{c}$ (for $\al > 1$) of the equation \eqref{funcionalvstatesdiscos} with $0 \leq  \al < 2$ that bifurcate from the disk at $\Omega = \Omega_m$.
\end{theorem}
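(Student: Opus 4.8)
The plan is to prove Theorem \ref{teoremaexistenciadiscos} by bifurcation theory, applying the Crandall--Rabinowitz theorem to the functional $F(\Omega,R)$ defined in \eqref{funcionalvstatesdiscos}. The trivial branch is the disk $R \equiv 1$, which solves $F(\Omega,1) = 0$ for all $\Omega$ (since on a disk the contour equation reduces to a tangential reparametrization and each $F_i$ vanishes by symmetry). The strategy is therefore to detect the values $\Omega = \Omega_m$ at which a new branch splits off, by studying the linearization $\pa_R F(\Omega,1)$ and verifying that: (i) the linearized operator is Fredholm of index zero between the chosen analytic spaces, (ii) its kernel is one-dimensional at $\Omega = \Omega_m$, spanned by $\cos(mx)$, and (iii) the transversality (non-degeneracy) condition $\pa_\Omega \pa_R F(\Omega_m,1)[\cos(mx)] \notin \mathrm{Range}(\pa_R F(\Omega_m,1))$ holds.

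\medskip

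\noindent First I would set $R = 1 + g$ and compute the Gateaux derivative of $F$ at the disk in the direction $h(x) = \cos(nx)$. The key computational input is that each kernel $\left((R(x)-R(y))^2 + 4R(x)R(y)\sin^2\left(\frac{x-y}{2}\right)\right)^{-\al/2}$ evaluated at $R \equiv 1$ becomes $\left(2\left|\sin\left(\frac{x-y}{2}\right)\right|\right)^{-\al}$, so that the linearization reduces to convolution-type singular integrals against $\cos(x-y)$ and $\sin(x-y)$ with this homogeneous kernel. These are diagonalized by the Fourier modes $\cos(nx)$; the resulting eigenvalues are expressible through the integrals $\int_{\mathbb{T}} \frac{\cos(ny)}{|\sin(y/2)|^{\al}}\,dy$, which are classical and evaluate to Gamma-function ratios. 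Collecting the pieces, the linearized operator acts diagonally, sending $\cos(nx)$ to $n\,(\Omega - \Omega_n)\sin(nx)$ for an explicit sequence $\Omega_n$; separating the cases $\al = 0$, $\al = 1$ and $\al \in (0,2)\setminus\{1\}$ yields exactly the three formulas stated for $\Omega_m$. The kernel at $\Omega = \Omega_m$ is then one-dimensional (spanned by $\cos(mx)$ in the $m$-fold class), provided one checks that $\Omega_n \neq \Omega_m$ for all other admissible $n$ in the $m$-fold symmetric subspace --- a monotonicity or sign argument for the sequence $\Omega_n$.

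\medskip

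\noindent The transversality condition is then immediate from the diagonal structure: differentiating $n\,(\Omega - \Omega_n)$ in $\Omega$ gives $m \neq 0$ on the kernel mode, so $\pa_\Omega \pa_R F(\Omega_m,1)[\cos(mx)] = m\sin(mx)$, which is not in the range (the range consists of the span of $\sin(nx)$ for $n \neq m$). The remaining abstract hypotheses of Crandall--Rabinowitz --- that $F$ is $C^1$ (indeed analytic, which is what upgrades the regularity of the branch) as a map between the analytic Banach spaces, and that $\pa_R F(\Omega_m,1)$ is Fredholm of index zero --- require showing that $F$ maps the chosen space $X^{k+1,m}_c$ (or its $\log$/$\al$ variants) continuously into the appropriate target space of odd analytic functions $Y$, and that the singular integral operators gain or preserve the correct regularity. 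This is precisely where the choice of the analytic classes $X^k_c$, $X^{k+\log}_c$, $X^{k+\al}_c$ and their $m$-fold restrictions must be calibrated to the value of $\al$: for $\al < 1$ the kernel is only weakly singular and the operator is smoothing, for $\al = 1$ one loses a logarithm (hence the $\log$ space), and for $\al > 1$ one must track the exact $\al$-order loss of derivatives.

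\medskip

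\noindent The main obstacle, as usual in these patch problems, is establishing the mapping and boundedness properties of the nonlinear functional $F$ on the \emph{analytic} spaces $\mathcal{C}_w(c)$ uniformly in the strip width --- that is, controlling the singular integrals when $x$ is complexified to $x \pm ic$, where the denominators $\left|\sin\left(\frac{x-y}{2}\right)\right|$ may vanish or lose positivity. One must verify that for $R$ close to $1$ and $c$ small enough the argument of the fractional power stays in a sector avoiding the branch cut, so the integrand remains analytic, and then bound the complexified integrals in the $L^2$-based analytic norms. Handling this, together with the Fredholm property (compactness of the lower-order remainder after subtracting the leading diagonal symbol) in the borderline cases $\al = 1$ and $\al$ near $2$, is the technical heart of the argument; once these functional-analytic estimates are in place, the bifurcation and the analyticity of the resulting V-states follow from the analytic Crandall--Rabinowitz framework.
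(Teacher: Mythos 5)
Your proposal is correct and follows essentially the same route as the paper: a Crandall--Rabinowitz argument around the trivial branch $R\equiv 1$, with the linearization diagonalized by Fourier modes (yielding the stated $\Omega_m$ via Gamma-function integrals), one-dimensionality of kernel and cokernel in the $m$-fold class, transversality from the diagonal structure, and the technical core being the boundedness of the complexified singular integrals on the analytic spaces $X^{k+1,m}_c$, $X^{k+1+\log,m}_c$, $X^{k+\al,m}_c$. The paper's own proof is in fact terser than your outline: it lists the same six hypotheses and then defers Steps 2, 4, 5, 6 verbatim to the earlier gSQG paper \cite{Castro-Cordoba-GomezSerrano:existence-regularity-gsqg}, adapting only the mapping/continuity steps to the analytic strip spaces by the complexification estimates developed in the ellipse section.
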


Both proofs are carried out by means of a combination of a Crandall-Rabinowitz's theorem and a priori estimates.

\section{Existence for the perturbation from ellipses}
\label{sectionelipses}

\begin{proofthm}{teoremaexistenciaelipses}
 The proof will be divided into 6 steps. These steps correspond to check the hypotheses of the Crandall-Rabinowitz theorem \cite{Crandall-Rabinowitz:bifurcation-simple-eigenvalues} for

\begin{align}
\label{funcionalelipses}
F(r,R) & = \frac{r}{(1+r)^2}\left(\frac{r^2-1}{2}\sin(2x) + R'(x)(\cos^2(x)  + r \sin^{2}(x)) + (r-1)\sin(2x)R(x) + R(x)R'(x)\right) + \frac{1}{4\pi}\sum_{i=1}^{2} F_{i}(r,R) \nonumber \\
& = F_{0}(r,R) + \frac{1}{4\pi}F_{1}(r,R) + \frac{1}{4\pi}F_{2}(r,R)
\end{align}

where

%[Corregir el tamanyo luego, asi queda muy pequenyo, no?].
\scriptsize

\begin{align*}
F_1(r,R) & = \int \log\left(\frac{(\cos(x)-\cos(x-y) + R(x)\cos(x) - R(x-y)\cos(x-y))^{2} + (r(\sin(x)-\sin(x-y)) + R(x)\sin(x) - R(x-y)\sin(x-y))^{2})}{4\left|\sin\left(\frac{y}{2}\right)\right|^{2}}\right)
 \\
& \times (\cos(x)(r+R(x))+\sin(x)R'(x))(\sin(x-y) + R(x-y)\sin(x-y) - \cos(x-y)R'(x-y)) dy\\
& +\int \log\left(4\left|\sin\left(\frac{y}{2}\right)\right|^{2}\right)(\cos(x)(r+R(x))+\sin(x)R'(x))(\sin(x-y) + R(x-y)\sin(x-y) - \cos(x-y)R'(x-y)) dy\\
F_2(r,R) & = -\int \log\left(\frac{(\cos(x)-\cos(x-y) + R(x)\cos(x) - R(x-y)\cos(x-y))^{2} + (r(\sin(x)-\sin(x-y)) + R(x)\sin(x) - R(x-y)\sin(x-y))^{2}}{4\left|\sin\left(\frac{y}{2}\right)\right|^{2}}\right)
 \\
& \times (\sin(x)+R(x)\sin(x)-\cos(x)R'(x))(\cos(x-y)(r+R(x-y))+\sin(x-y)R'(x-y)) dy\\
& -\int \log\left(4\left|\sin\left(\frac{y}{2}\right)\right|^{2}\right) (\sin(x)+R(x)\sin(x)-\cos(x)R'(x))(\cos(x-y)(r+R(x-y))+\sin(x-y)R'(x-y)) dy\\
\end{align*}
\normalsize

The hypotheses are the following:

\begin{enumerate}

\item The functional $F$ satisfies $$F(r,R)\,:\, \R\times \{V^{\ep}\}\mapsto Y^{k-1}_{c},$$ where $V^{\ep}$ is the open neighbourhood of 0
$$V^{\ep}=\{ f\in X^{k}_{c}\,:\, ||f||_{X^{k}_{c}}<\ep\},$$
for all $0<\ep<\ep_{0}(m)$ and $k\geq 3$.

\item $F(r,0) = 0$ for every $r$.
\item The partial derivatives $F_{r}$, $F_{R}$ and $F_{r R}$ exist and are continuous.
\item Ker($\mathcal{F}$) and $Y^{k-1}_{c}$/Range($\mathcal{F}$) are one-dimensional, where $\mathcal{F}$ is the linearized operator around $R = 0$ at $r = r(m)$.
\item $F_{r R}(r(m),0)(h_0) \not \in$ Range($\mathcal{F}$), where Ker$(\mathcal{F}) = \langle h_0 \rangle$.
\item Step 1 can be applied to the spaces $X^{k,odd}_{c}$ ($X^{k,even}_{c}$) and $Y^{k-1,odd}_{c}$ ($X^{k-1,even}_{c}$) instead of $X^{k}_{c}$ and $Y^{k-1}_{c}$.

\end{enumerate}

\begin{rem}
We remark that the function inside the parentheses in the logarithm in $F_1$ and $F_2$ is uniformly bounded from below in $y$ for every $x$ by a strictly positive constant. Then we can analytically extend the integrand in $x$ to the strip $|\Im(z)| \leq c$ in such a way that the real part of this extension stays uniformly bounded away from 0 for a small enough $c$.
\end{rem}

\subsection{Step 1}

We start computing $k-1$ derivatives of $F(r,R)$ and showing that the terms are in $Y^{0}_{c}$. We have that

\begin{align*}
\pa^{k-1} F_{0}(r,R) & = \pa^{k-1} \left(\frac{r}{(1+r)^2}\left(\frac{r^2-1}{2}\sin(2x) + R'(x)(\cos^2(x)  + r \sin^{2}(x)) + (r-1)\sin(2x)R(x) + R(x)R'(x)\right)\right) \\
& = \frac{r}{(1+r)^2}\left(\pa^{k} R(x)(\cos^2(x)  + r \sin^{2}(x)) +  R(x)\pa^{k} R(x)\right) + \text{l.o.t}, \\
\end{align*}

which is clearly in $Y^{0}_{c}$ since $\|R(x\pm ic)\|_{L^{\infty}} \leq C\|R(x\pm ic)\|_{X^{k}_{c}}$ for $k \geq 3$. We move to $F_1$ and $F_2$. Throughout the computations, we will repeatedly use the following estimate. Let

\begin{align*}
A(x,y) & = \frac{(\cos(x)-\cos(x-y) + R(x)\cos(x) - R(x-y)\cos(x-y))^{2}}{4\left|\sin\left(\frac{y}{2}\right)\right|^{2}} \\
& + \frac{(r(\sin(x)-\sin(x-y))+ R(x)\sin(x) - R(x-y)\sin(x-y))^{2})}{4\left|\sin\left(\frac{y}{2}\right)\right|^{2}}.
\end{align*}

Using standard trigonometry, we can rewrite it as

\begin{align*}
A(x,y) & = \left(-\sin\left(x-\frac{y}{2}\right)(1+R(x)) + \frac{R(x)-R(x-y)}{2\sin\left(\frac{y}{2}\right)}\cos(x-y)\right)^{2} \\
& + \left(\cos\left(x-\frac{y}{2}\right)(r+R(x)) + \frac{R(x)-R(x-y)}{2\sin\left(\frac{y}{2}\right)}\sin(x-y)\right)^{2}.
\end{align*}

The goal is to obtain upper and lower bounds of $A(x \pm ic, y)$. We start with the upper bound:

\begin{align*}
\|A(x\pm ic, y)\|_{L^{\infty}} & \lesssim (1+\|R(x\pm ic)\|_{L^{\infty}})^{2} \cosh(2c) + \|R'(x \pm ic)\|_{L^{\infty}}^{2}\cosh(2c) \\
& + (r+\|R(x\pm ic)\|_{L^{\infty}})^{2} \cosh(2c) + \|R'(x \pm ic)\|_{L^{\infty}}^{2}\cosh(2c),
\end{align*}

where we have used that $|\cos(x\pm ic)|^{2}, |\cos(x\pm ic)|^{2} \leq \cosh(2c)$. Next, the following lower bounds follow:

\begin{align*}
|A(x\pm ic, y)| & \geq \left|1+R(x\pm ic)\right|\left|\sin\left(x\pm ic -\frac{y}{2}\right)\right|\left(\left|1+R(x\pm ic)\right|\left|\sin\left(x\pm ic -\frac{y}{2}\right)\right| - 2\|R'(x \pm ic)\|_{L^{\infty}}\sqrt{\cosh(2c)}\right) \\
& + \left|r+R(x\pm ic)\right|\left|\sin\left(x\pm ic -\frac{y}{2}\right)\right|\left(\left|1+R(x\pm ic)\right|\left|\sin\left(x\pm ic -\frac{y}{2}\right)\right| - 2\|R'(x \pm ic)\|_{L^{\infty}}\sqrt{\cosh(2c)}\right) \\
& \gtrsim r^{2},
\end{align*}

uniformly in $x$ and $y$ if $c$ is small enough. Using the previous bounds, it follows that

\begin{align}
\left\|\int \log(|A(x\pm ic,y)|) f(y) dy \right\|_{L^{2}} & \leq C \|\log(A(x\pm ic,y))\|_{L^{\infty}(x,y)} \|f\|_{L^{2}} \leq C \|f\|_{L^{2}} \nonumber \\
\left\|\int \log(|A(x\pm ic,y)|) f(x \pm ic -y) dy \right\|_{L^{2}} & \leq C \|f(x \pm ic)\|_{L^{2}} \label{boundkernelanalytic}
\end{align}

Taking $k-1$ derivatives of $F_1$, the most singular terms are:

%[Corregir el tamanyo luego, asi queda muy pequenyo, no?].

\scriptsize
\begin{align*}
I_{1,1}(x) & = \int \log\left(A(x,y)\right)(\cos(x)(\pa^{k-1} R(x))+\sin(x)\pa^{k} R(x))(\sin(x-y) + R(x-y)\sin(x-y) - \cos(x-y)R'(x-y)) dy\\
I_{1,2}(x) & = \int \log\left(4\left|\sin\left(\frac{y}{2}\right)\right|^{2}\right)(\cos(x)(\pa^{k-1} R(x))+\sin(x)\pa^{k} R(x))(\sin(x-y) + R(x-y)\sin(x-y) - \cos(x-y)R'(x-y)) dy\\
I_{1,3}(x) & = \int \log\left(A(x,y)\right)(\cos(x)(r+R(x))+\sin(x)R'(x))(\pa^{k-1} R(x-y)\sin(x-y) - \cos(x-y)\pa^{k} R(x-y)) dy\\
I_{1,4}(x) & = \int \log\left(4\left|\sin\left(\frac{y}{2}\right)\right|^{2}\right)(\cos(x)(r+R(x))+\sin(x)R'(x))(\pa^{k-1} R(x-y)\sin(x-y) - \cos(x-y)\pa^{k} R(x-y)) dy\\
I_{1,5}(x) & = \int \frac{2(\cos(x)-\cos(x-y) + R(x)\cos(x) - R(x-y)\cos(x-y))( \pa^{k-1} R(x)\cos(x) - \pa^{k-1} R(x-y)\cos(x-y))}{\left((\cos(x)-\cos(x-y) + R(x)\cos(x) - R(x-y)\cos(x-y))^{2} + (r(\sin(x)-\sin(x-y)) + R(x)\sin(x) - R(x-y)\sin(x-y))^{2})\right)} \\
& \times \left[(\cos(x)(r+ R(x))+\sin(x) R'(x))(\sin(x-y) + R(x-y)\sin(x-y) - \cos(x-y)R'(x-y))\right. \\
& \left.- (\sin(x)+R(x)\sin(x)-\cos(x)R'(x))(\cos(x-y)(r+R(x-y))+\sin(x-y)R'(x-y))\right]dy\\
I_{1,6}(x) & = \int \frac{2(r(\sin(x)-\sin(x-y)) + R(x)\sin(x) - R(x-y)\sin(x-y))(\pa^{k-1} R(x)\sin(x) - \pa^{k-1} R(x-y)\sin(x-y))}{\left((\cos(x)-\cos(x-y) + R(x)\cos(x) - R(x-y)\cos(x-y))^{2} + (r(\sin(x)-\sin(x-y)) + R(x)\sin(x) - R(x-y)\sin(x-y))^{2})\right)}
 \\
& \times \left[(\cos(x)(r+ R(x))+\sin(x) R'(x))(\sin(x-y) + R(x-y)\sin(x-y) - \cos(x-y)R'(x-y))\right. \\
& \left.- (\sin(x)+R(x)\sin(x)-\cos(x)R'(x))(\cos(x-y)(r+R(x-y))+\sin(x-y)R'(x-y))\right]dy\\
\end{align*}
\normalsize

Using the bound \eqref{boundkernelanalytic}, we can show that

\begin{align*}
\|I_{1,1}(x \pm ic)\|_{L^{2}}, \|I_{1,3}(x \pm ic)\|_{L^{2}} < \infty,
\end{align*}

and

\begin{align*}
\|I_{1,2}(x \pm ic)\|_{L^{2}}, \|I_{1,4}(x \pm ic)\|_{L^{2}} < \infty,
\end{align*}

by virtue of the Generalized Young's inequality \cite{Folland:introduction-pdes}. We are only left to deal with $I_{1,5}(x)$ and $I_{1,6}(x)$. To do so, we will study the terms

\begin{align*}
B(x,y) & = \frac{2(\cos(x)-\cos(x-y) + R(x)\cos(x) - R(x-y)\cos(x-y))}{2\sin\left(\frac{y}{2}\right)} \\
& = \left(-2\sin\left(x-\frac{y}{2}\right)(1+R(x)) + 2\cos(x-y)\frac{R(x)-R(x-y)}{2\sin\left(\frac{y}{2}\right)}\right), \\
C(x,y) & = \frac{2(r\sin(x)-r\sin(x-y) + R(x)\sin(x) - R(x-y)\sin(x-y))}{2\sin\left(\frac{y}{2}\right)} \\
& = \left(2\cos\left(x-\frac{y}{2}\right)(r+R(x)) + 2\sin(x-y)\frac{R(x)-R(x-y)}{2\sin\left(\frac{y}{2}\right)}\right). \\
\end{align*}

It is immediate to see $B(x\pm ic,y), C(x\pm ic,y)$ are uniformly bounded in $L^{\infty}$ for a small enough $c$, and

\begin{align*}
D(x,y) & = \frac{1}{2\sin\left(\frac{y}{2}\right)}\left[(\cos(x)(r+ R(x))+\sin(x) R'(x))(\sin(x-y) + R(x-y)\sin(x-y) - \cos(x-y)R'(x-y))\right. \\
& \left.- (\sin(x)+R(x)\sin(x)-\cos(x)R'(x))(\cos(x-y)(r+R(x-y))+\sin(x-y)R'(x-y))\right]\\
& = D_{1}(x,y) + D_{2}(x,y) \\
D_{1}(x,y) & = \frac{(\cos(x)(r+ R(x))+\sin(x) R'(x))}{2\sin\left(\frac{y}{2}\right)} \\
& \times (\sin(x) - \sin(x-y) + R(x)\sin(x) - R(x-y)\sin(x-y) - \cos(x)R'(x)  + \cos(x-y)R'(x-y)) \\
& = (\cos(x)(r+ R(x))+\sin(x) R'(x)) \\
& \times \left(\cos\left(x-\frac{y}{2}\right)(1+R(x)) + \sin(x-y)\frac{R(x)-R(x-y)}{2\sin\left(\frac{y}{2}\right)} + \sin\left(x-\frac{y}{2}\right)R'(x) - \cos(x-y)\frac{R'(x)-R'(x-y)}{2\sin\left(\frac{y}{2}\right)}\right) \\
D_{2}(x,y) & = -\frac{(\sin(x)+R(x)\sin(x)-\cos(x)R'(x))}{2\sin\left(\frac{y}{2}\right)} \\
& \times (\cos(x)(r+R(x)) - \cos(x-y)(r+R(x-y)) + \sin(x)R'(x)  - \sin(x-y)R'(x-y)) \\
& = -(\sin(x)+R(x)\sin(x)-\cos(x)R'(x)) \\
& \times \left(-\sin\left(x-\frac{y}{2}\right)(r+R(x)) + \cos(x-y)\frac{R(x)-R(x-y)}{2\sin\left(\frac{y}{2}\right)} + R'(x)\cos\left(x-\frac{y}{2}\right) + \frac{R'(x)-R'(x-y)}{2\sin\left(\frac{y}{2}\right)}\sin(x-y)\right) \\
\end{align*}

Both $D_1(x \pm ic, y)$ and $D_{2}(x \pm ic, y)$ can therefore be bounded uniformly in $x$ and $y$ by a constant that depends on $c$ and $\|R\|_{X^{k}_{c}}$. Then, the bounds for $\|I_{1,5}(x\pm ic)\|_{L^{2}}$, $\|I_{1,6}(x \pm ic)\|_{L^{2}}$ follow easily using the boundedness of $A(x,y)$ and the representations

\begin{align*}
I_{1,5}(x) & = \int \frac{B(x,y)D(x,y)}{A(x,y)}(\pa^{k-1}R(x)\cos(x) - \pa^{k-1}R(x-y)\cos(x-y)) dy \\
I_{1,6}(x) & = \int \frac{C(x,y)D(x,y)}{A(x,y)}(\pa^{k-1}R(x)\sin(x) - \pa^{k-1}R(x-y)\sin(x-y)) dy.
\end{align*}

\subsection{Step 2}
\label{sectionpaso2elipses}
We note that this step was proved by Kirchhoff \cite{Kirchhoff:vorlesungen-math-physik}. Here we include it for completeness of the argument. Substituting $R(x) = 0$ in \eqref{funcionalelipses}, we obtain

\begin{align*}
 F(r,0) = \frac{r}{1+r} \frac{r-1}{2} \sin(2x) + \frac{1}{4\pi}\sum_{i=1}^{2} F_{i}(r,0),
\end{align*}

where

\begin{align*}
 F_{1}(r,0) & = -r\int_{0}^{2\pi} \log\left(2\sin^{2}\left(\frac{x-y}{2}\right)\right)(\sin(x-y))dy \\
F_{2}(r,0) & = -r\int_{0}^{2\pi} \log\left((1+r^2)+(r^2-1)\cos(x+y)\right)(\sin(x-y))dy
\end{align*}

We start computing $F_1(r,0)$, we get:

\begin{align*}
 F_{1}(r,0) & = -r\int_{0}^{2\pi} \log\left(2\sin^{2}\left(\frac{y}{2}\right)\right)(\sin(y))dy = 0
\end{align*}

Then

\begin{align*}
 F_{2}(r,0) & =  -r\int_{0}^{2\pi} \log\left((1+r^2)+(r^2-1)\cos(y)\right)(\sin(2x-y))dy \\
& =  -r\sin(2x)\int_{0}^{2\pi} \log\left((1+r^2)+(r^2-1)\cos(y)\right)\cos(y)dy \\
& = 2\pi r\sin(2x) \frac{1-r}{1+r}
\end{align*}

This implies that

\begin{align*}
F(r,0) & = \frac{r}{1+r} \frac{r-1}{2} \sin(2x) -  \frac{2\pi}{4\pi}\frac{r}{1+r} (r-1) \sin(2x) = 0,
\end{align*}

as we wanted to prove.

\subsection{Step 3}

We need to prove the existence and the continuity of the Gateaux derivatives $\pa_{R} F(r,R)$, $\pa_{r} F(r,R)$ and $\pa_{r,R}F(r,R)$. We have the following Lemma:

%In order to do it the most difficult part is to show the existence and continuity of $\pa_R F_i(R)$ for $i=1,2$.

%[Quiza explicar un poco las $\pa_{r,R}$ ??]

\begin{lemma}
\label{lemagatoderivada}
 For all $R\in V^r$ and for all $h\in X_{c}^{k}$ such that $||h||_{X^{k}_{c}}=1$ we have that
$$\lim_{t \to 0}\frac{F_i(R+th)-F_i(R)}{t}= D_i[R]h\quad \text{in $X^{k-1}_{c}$},$$
where
\small
\begin{align*}
D_{0}[R] h & = \frac{r}{(1+r)^2}\left( h'(x)(\cos^2(x)  + r \sin^{2}(x)) + (r-1)\sin(2x)h(x) + h(x)R'(x) + R(x)h'(x)\right) \\
D_1[R] h & = \int \log(A(x,y))(\cos(x)h(x))+\sin(x)h'(x))(\sin(x-y) + R(x-y)\sin(x-y) - \cos(x-y)R'(x-y)) dy\\
& + \int \log\left(4\left|\sin\left(\frac{y}{2}\right)\right|^{2}\right)(\cos(x)h(x))+\sin(x)h'(x))(\sin(x-y) + R(x-y)\sin(x-y) - \cos(x-y)R'(x-y)) dy\\
& + \int \log(A(x,y))(\cos(x)(r+R(x))+\sin(x)R'(x))(h(x-y)\sin(x-y) - \cos(x-y)h'(x-y)) dy\\
& + \int \log\left(4\left|\sin\left(\frac{y}{2}\right)\right|^{2}\right)(\cos(x)(r+R(x))+\sin(x)R'(x))(h(x-y)\sin(x-y) - \cos(x-y)h'(x-y)) dy\\
& + \int \frac{2(\cos(x)-\cos(x-y) + R(x)\cos(x) - R(x-y)\cos(x-y))(h(x)\cos(x) - h(x-y)\cos(x-y))}{(\cos(x)-\cos(x-y) + R(x)\cos(x) - R(x-y)\cos(x-y))^{2} + (r(\sin(x)-\sin(x-y)) + R(x)\sin(x) - R(x-y)\sin(x-y))^{2}}
 \\
& \times (\cos(x)(r+R(x))+\sin(x)R'(x))(\sin(x-y) + R(x-y)\sin(x-y) - \cos(x-y)R'(x-y)) dy\\
 & + \int \frac{2(r(\sin(x)-\sin(x-y)) + R(x)\sin(x) - R(x-y)\sin(x-y))(h(x)\sin(x) - h(x-y)\sin(x-y))}{(\cos(x)-\cos(x-y) + R(x)\cos(x) - R(x-y)\cos(x-y))^{2} + (r(\sin(x)-\sin(x-y)) + R(x)\sin(x) - R(x-y)\sin(x-y))^{2}}
 \\
& \times (\cos(x)(r+R(x))+\sin(x)R'(x))(\sin(x-y) + R(x-y)\sin(x-y) - \cos(x-y)R'(x-y)) dy\\
D_{2}[R] h & = -\int \log(A(x,y))(\sin(x)+R(x)\sin(x)-\cos(x)R'(x))(\cos(x-y)h(x-y)+\sin(x-y)h'(x-y)) dy\\
& -\int \log\left(4\left|\sin\left(\frac{y}{2}\right)\right|^{2}\right)(\sin(x)+R(x)\sin(x)-\cos(x)R'(x))(\cos(x-y)h(x-y)+\sin(x-y)h'(x-y)) dy\\
 & - \int \log(A(x,y))(h(x)\sin(x)-\cos(x)h'(x))(\cos(x-y)(r+R(x-y))+\sin(x-y)R'(x-y)) dy\\
 & - \int \log\left(4\left|\sin\left(\frac{y}{2}\right)\right|^{2}\right)(h(x)\sin(x)-\cos(x)h'(x))(\cos(x-y)(r+R(x-y))+\sin(x-y)R'(x-y)) dy\\
& - \int \frac{2(\cos(x)-\cos(x-y) + R(x)\cos(x) - R(x-y)\cos(x-y))(h(x)\cos(x) - h(x-y)\cos(x-y))}{(\cos(x)-\cos(x-y) + R(x)\cos(x) - R(x-y)\cos(x-y))^{2} + (r(\sin(x)-\sin(x-y)) + R(x)\sin(x) - R(x-y)\sin(x-y))^{2}}
 \\
& \times (\sin(x)+R(x)\sin(x)-\cos(x)R'(x))(\cos(x-y)(r+R(x-y))+\sin(x-y)R'(x-y)) dy\\
& - \int \frac{2(r(\sin(x)-\sin(x-y)) + R(x)\sin(x) - R(x-y)\sin(x-y))(h(x)\sin(x) - h(x-y)\sin(x-y))}{(\cos(x)-\cos(x-y) + R(x)\cos(x) - R(x-y)\cos(x-y))^{2} + (r(\sin(x)-\sin(x-y)) + R(x)\sin(x) - R(x-y)\sin(x-y))^{2}}
 \\
& \times (\sin(x)+R(x)\sin(x)-\cos(x)R'(x))(\cos(x-y)(r+R(x-y))+\sin(x-y)R'(x-y)) dy\\
\end{align*}
\normalsize
Moreover, $D_i[R] h$ are continuous in $R$.
\end{lemma}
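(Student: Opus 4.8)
The plan is to obtain each Gateaux derivative by a direct first-variation computation and then to upgrade the resulting pointwise convergence to convergence in the analytic norm $X^{k-1}_c$ by invoking the kernel estimate \eqref{boundkernelanalytic} and the algebraic bounds on $A,B,C,D$ already established in Step 1. The term $D_0[R]h$ requires no analysis: $F_0$ is a finite expression that is affine-plus-quadratic in $(R,R')$, so $\frac{F_0(R+th)-F_0(R)}{t}$ equals $D_0[R]h$ plus a single term of order $t$ coming from the bilinear piece $R(x)R'(x)$, which tends to $0$ in $X^{k-1}_c$ as $t\to 0$. The substance of the lemma lies in $F_1$ and $F_2$.

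For $F_1,F_2$ I would write each integrand as a ``$\log A$'' contribution plus a ``$\log(4|\sin(y/2)|^2)$'' contribution, each multiplied by a product $P(x)\,Q(x-y)$ with $P,Q$ linear in $(R,R')$. Inserting $R+th$ and expanding, the first variation splits into two mechanisms. Differentiating the factors $P,Q$ reproduces the first four integrals of $D_1[R]h$ (and the analogous four of $D_2[R]h$), keeping $\log A$ or $\log(4|\sin(y/2)|^2)$ intact; since $\log(4|\sin(y/2)|^2)$ does not depend on $R$, it contributes nothing further. Differentiating the logarithm produces $\frac{\delta_h A}{A}\,P\,Q$. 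Writing $A=(N_1^2+N_2^2)/(4\sin^2(y/2))$ with $N_1,N_2$ the two numerators appearing in the definition of $A$, one has $\frac{\delta_h A}{A}=\frac{2N_1\,\delta_h N_1 + 2N_2\,\delta_h N_2}{N_1^2+N_2^2}$, where $\delta_h N_1 = h(x)\cos(x)-h(x-y)\cos(x-y)$ and $\delta_h N_2 = h(x)\sin(x)-h(x-y)\sin(x-y)$; this is exactly the last two integrals of $D_1[R]h$. Thus the candidate $D_i[R]h$ is dictated by the chain and product rules, and only the rigorous justification remains.

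To prove convergence in $X^{k-1}_c$ I would apply $\pa^{k-1}$ to the difference quotient, distribute by Leibniz, and show every resulting term converges in $L^2$ on the lines $\Im(x)=\pm c$. The log-times-product terms are controlled by \eqref{boundkernelanalytic}, precisely as $I_{1,1}$--$I_{1,4}$ were in Step 1; the top-order pieces place $\pa^{k}$ on an $R$ or $h$ factor, which lies in $L^2$ since $R,h\in X^{k}_c$. For the rational ``$\log A$'' linearization I would reuse the representation through $A,B,C,D$: since $A(x\pm ic,y)\gtrsim r^2$ uniformly and $B,C,D$ are uniformly bounded for small $c$, after extracting the singular factor $1/\sin(y/2)$ the kernels $BD/A$ and $CD/A$ (and their $\pa^{k-1}$ derivatives) are bounded and act on the $L^2$ functions $\pa^{k-1}h(x)\cos(x)-\pa^{k-1}h(x-y)\cos(x-y)$, so \eqref{boundkernelanalytic} applies once more. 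The remainder after subtracting $t\,D_i[R]h$ is $o(t)$ because all these building blocks are smooth in $R+th$ on a fixed strip and the lower bound $A(R+th)\gtrsim r^2$ persists for $\|R\|_{X^{k}_c}$ and $t$ small; dominated convergence then lets me pass $t\to 0$ under the integral and inside the $L^2$ norm. Continuity of $R\mapsto D_i[R]h$ follows from the same estimates applied to differences $R_1,R_2$: each of $A$, $1/A$, $B$, $C$, $D$, $\log A$, and the product factors is Lipschitz in $R$ on the strip, with constants controlled by the uniform lower bound on $A$, yielding $\|D_i[R_1]h-D_i[R_2]h\|_{X^{k-1}_c}\lesssim\|R_1-R_2\|_{X^{k}_c}$.

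I expect the main obstacle to be the rational ``$\log A$'' linearization at top order: one must check that after taking $k-1$ derivatives and extracting the $1/\sin(y/2)$ singularity the resulting kernels are genuinely bounded uniformly on $\Im(x)=\pm c$, so that \eqref{boundkernelanalytic} can legitimately be invoked. This is exactly the point at which the uniform lower bound $A\gtrsim r^2$ from Step 1 is indispensable, since it is what keeps the factor $1/A$ analytic and bounded on the strip throughout the expansion.
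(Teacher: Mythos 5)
Your proposal is correct and matches the paper's intent: the paper's entire proof of this lemma is the single line ``Straightforward computation,'' and your argument is precisely that computation carried out --- the product/chain-rule identification of $D_i[R]h$ followed by convergence and continuity estimates in $X^{k-1}_{c}$ recycled from Step 1 (the bound \eqref{boundkernelanalytic}, the representation through $A$, $B$, $C$, $D$, and the uniform lower bound $A \gtrsim r^{2}$ on the strip). Nothing in your write-up conflicts with the paper; it simply supplies the details the authors chose to omit.
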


\begin{proof}
Straightforward computation.
\end{proof}

\subsection{Step 4}

Before starting Step 4, we compute the linearization of $F$ around 0 in the direction $h(x)$. To do so, we will compute the contribution of each frequency separately. Assume $h(x) = \cos(kx), k \geq 2$. We will highlight the differences in some of the terms for the case $k = 1$ later. By taking $R = 0$ in Lemma \ref{lemagatoderivada} one sees that this linearization is equal to

\begin{align*}
\left(\frac{1}{4}(2-k)(1-r)\sin((k-2)x)-\frac{k}{2}(1+r)\sin(kx)-\frac{1}{4}(2+k)(1-r)\sin((k+2)x)\right)\frac{r}{(1+r)^2} + \frac{1}{4\pi}\sum_{i=1}^{5} I_{i},
\end{align*}

where:

\begin{align*}
I_{1} & = -r\int_{0}^{2\pi}\frac{\sin(x-y)2(\cos(x)-\cos(y))(\cos(x)h(x)-\cos(y)h(y))}{2\sin^{2}\left(\frac{x-y}{2}\right)((1+r^2)+(r^2-1)\cos(x+y))}dy \\
I_{2} & = -r^2\int_{0}^{2\pi}\frac{\sin(x-y)2(\sin(x)-\sin(y))(h(x)\sin(x)-h(y)\sin(y))}{2\sin^{2}\left(\frac{x-y}{2}\right)((1+r^2)+(r^2-1)\cos(x+y))}dy \\
I_{3} & = \int_{0}^{2\pi}\log\left(\sin^{2}\left(\frac{x-y}{2}\right)((1+r^2)+(r^2-1)\cos(x+y))\right)h(x)(-r\cos(y)\sin(x)+\cos(x)\sin(y))dy \\
I_{4} & = \int_{0}^{2\pi}\log\left(\sin^{2}\left(\frac{x-y}{2}\right)((1+r^2)+(r^2-1)\cos(x+y))\right)h(y)(-\cos(y)\sin(x)+r\cos(x)\sin(y))dy \\
I_{5} & = \int_{0}^{2\pi}\log\left(\sin^{2}\left(\frac{x-y}{2}\right)((1+r^2)+(r^2-1)\cos(x+y))\right)(h'(x)-h'(y))(r\cos(x)\cos(y)+\sin(x)\sin(y))dy \\
\end{align*}

Then, one can show the following Lemma:

\begin{lemma}

Let

\begin{align*}
 h(x) = \sum_{k=1}^{\infty} a_{k} \cos(kx),
\end{align*}

then $DF(0,r)[h]$ is given by

\begin{align*}
 \sum_{k=1}^{\infty} (x_{k}a_{k-2} + y_{k}a_{k} + z_{k}a_{k+2}) \sin(kx),
\end{align*}

where, for $k > 0$:

\begin{align*}
 x_{k} = -\frac{(1-r)}{8(1+r)^{2}} \left(-1 - 2 r + 2 k r - r^2 - \frac{(1-r)^{k}}{(1+r)^{k-2}}\right) \\
y_{1} = -\frac{1}{4(1+r)^2}(3r+1), \quad y_{k} =  \frac{1}{4(1+r)}\left(-1 - 2 r + 2 k r - r^2 - \frac{(1-r)^{k}}{(1+r)^{k-2}}\right) \quad k > 1, \\
z_{k} =  -\frac{1-r}{8(r+1)^2}\left(-1 - 2 r + 2 k r - r^2 - \frac{(1-r)^{k}}{(1+r)^{k-2}}\right),\\
\end{align*} 
and the convention that $a_k = 0$ for $k \leq 0$.
\end{lemma}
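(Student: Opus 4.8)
The plan is to use linearity and reduce the claim to a single-frequency computation, and then evaluate the resulting integrals in closed form. Since $DF(0,r)$ is linear (it is the derivative produced by Lemma~\ref{lemagatoderivada} evaluated at $R=0$) and $h=\sum_{k\ge 1}a_k\cos(kx)$, it suffices to compute $DF(0,r)[\cos(kx)]$ for each fixed $k$ and superpose. Reading off the claimed matrix form with $a_k=1$ and all other coefficients zero, the target identity is equivalent to
$$DF(0,r)[\cos(kx)] = z_{k-2}\sin((k-2)x) + y_k\sin(kx) + x_{k+2}\sin((k+2)x),$$
so once I compute the left-hand side I only need to match the coefficients of the three frequencies $k-2,\,k,\,k+2$ and re-index to recover $x_k,y_k,z_k$.

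For fixed $k\ge 2$ I would start from the explicit representation of $DF(0,r)[\cos(kx)]$ recorded just before the statement: the local term coming from $F_0$ (already displayed as a combination of $\sin((k\pm2)x)$ and $\sin(kx)$) plus $\tfrac{1}{4\pi}\sum_{i=1}^{5}I_i$. The crucial algebraic observation is the factorization
$$(1+r^2)+(r^2-1)\cos\theta = \frac{(1+r)^2}{2}\bigl(1-2\rho\cos\theta+\rho^2\bigr),\qquad \rho=\frac{1-r}{1+r}\in(0,1),$$
which I would verify by matching the constant and $\cos\theta$ coefficients. Since $(1+r)^2\rho^k=\frac{(1-r)^k}{(1+r)^{k-2}}$, the power $\rho^k$ is exactly the origin of the $\frac{(1-r)^k}{(1+r)^{k-2}}$ term appearing in $x_k,y_k,z_k$.

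With this factorization in hand I would evaluate the five integrals using two classical series: the Fourier expansion $\log(1-2\rho\cos\theta+\rho^2)=-2\sum_{n\ge1}\frac{\rho^n}{n}\cos(n\theta)$ for the logarithmic kernels in $I_3,I_4,I_5$, and the Poisson-kernel expansion $\frac{1}{1-2\rho\cos\theta+\rho^2}=\frac{1}{1-\rho^2}\bigl(1+2\sum_{n\ge1}\rho^n\cos(n\theta)\bigr)$ for the rational kernels in $I_1,I_2$. For the logarithmic integrals, inserting the Fourier series and invoking orthogonality of $\{\cos,\sin\}$ collapses each integral to a finite expression, while the product with $\cos(n(x+y))$ shifts the input frequency $k$ to $k-2,k,k+2$ and a geometric resummation in $\rho$ produces the $\rho^k$ contribution. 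For the rational integrals I would first rewrite $\frac{\sin(x-y)}{\sin^2(\frac{x-y}{2})}=2\cot(\frac{x-y}{2})$, interpret the resulting singular integral through the conjugate-function identity $\mathrm{p.v.}\int_0^{2\pi}\cot(\frac{x-y}{2})\cos(ny)\,dy=2\pi\sin(nx)$, and expand the remaining Poisson factor by the series above; term-by-term application of this identity followed by summation of the geometric series in $\rho$ again leaves only the frequencies $k-2,k,k+2$. Collecting these coefficients from $F_0$ and from $\tfrac{1}{4\pi}\sum_i I_i$ and matching to $z_{k-2},y_k,x_{k+2}$ yields the stated formulas.

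Finally I would treat the low frequencies separately: for $k=2$ the frequency $k-2=0$ gives $\sin(0)=0$, so the down-shifted term drops out, while for $k=1$ the down-shift produces $\sin((k-2)x)=\sin(-x)=-\sin(x)$, which must be folded back into the coefficient of $\sin(x)$; this folding, together with the convention $a_j=0$ for $j\le0$ that annihilates the $x_1a_{-1}$ term, is exactly what forces the exceptional value $y_1=-\tfrac{1}{4(1+r)^2}(3r+1)$ instead of the generic $y_k$. I expect the rational integrals $I_1,I_2$ to be the main obstacle: carrying the double denominator $\sin^2(\tfrac{x-y}{2})\,\bigl((1+r^2)+(r^2-1)\cos(x+y)\bigr)$, they require simultaneously handling the principal-value singularity of $\cot(\tfrac{x-y}{2})$ and the Poisson-kernel series, tracking which of the many product frequencies survive and carrying out the geometric resummation that generates the closed-form $\rho^k$ term; the logarithmic integrals $I_3$–$I_5$ are comparatively routine once the Fourier expansion of the logarithm is available.
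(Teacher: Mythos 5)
Your plan for generic frequencies $k\ge 2$ is viable and is, at bottom, the same computation as the paper's: reduce by linearity to $h=\cos(kx)$ and evaluate $I_1,\dots,I_5$ through the Fourier coefficients of the two kernels. With your factorization $(1+r^2)+(r^2-1)\cos\theta=\tfrac{(1+r)^2}{2}\bigl(1-2\rho\cos\theta+\rho^2\bigr)$, $\rho=\tfrac{1-r}{1+r}$, the closed forms your two classical series produce are exactly the integrals the paper proves in its appendix by residues, namely $\int_0^{2\pi}\tfrac{\cos(n\theta)}{(1+r^2)+(r^2-1)\cos\theta}\,d\theta=\tfrac{\pi}{r}\rho^{|n|}$ and $\int_0^{2\pi}\cos(n\theta)\log\bigl(\tfrac{1+r^2}{1-r^2}-\cos\theta\bigr)d\theta=-\tfrac{2\pi}{|n|}\rho^{|n|}$ for $n\neq 0$. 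The one methodological difference is in $I_1,I_2$: the paper never needs a principal value, because it first uses $\sin(x-y)=2\sin\bigl(\tfrac{x-y}{2}\bigr)\cos\bigl(\tfrac{x-y}{2}\bigr)$ together with $\cos x-\cos y=-2\sin\bigl(\tfrac{x-y}{2}\bigr)\sin\bigl(\tfrac{x+y}{2}\bigr)$ (and the analogue for $\sin x-\sin y$), which cancels $\sin^2\bigl(\tfrac{x-y}{2}\bigr)$ identically and leaves absolutely convergent integrals; your $\cot\bigl(\tfrac{x-y}{2}\bigr)$ route works but obliges you to justify term-by-term p.v.\ integrals after expanding the numerator and the Poisson series. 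That is a harmless, slightly heavier variant.

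The genuine gap is your treatment of $k=1$. You claim the exceptional value $y_1$ is "exactly" produced by folding the down-shifted term $z_{-1}\sin(-x)=-z_{-1}\sin(x)$ into the generic diagonal coefficient. The arithmetic refutes this. Extending the lemma's formulas formally to negative indices, the bracket at $k=1$ is $-2$, so the generic diagonal coefficient would be $y_1^{\mathrm{gen}}=-\tfrac{1}{2(1+r)}$, while the bracket at $k=-1$ gives $z_{-1}=\tfrac{1+3r}{4(1+r)^2}$; hence your folding prediction is
\begin{align*}
y_1^{\mathrm{gen}}-z_{-1}=-\frac{1}{2(1+r)}-\frac{1+3r}{4(1+r)^2}=-\frac{3+5r}{4(1+r)^2},
\end{align*}
which differs from the lemma's $y_1=-\tfrac{1+3r}{4(1+r)^2}$ by $\tfrac{1}{2(1+r)}\neq 0$ (curiously, the true value equals $-z_{-1}$ alone, with no generic diagonal contribution at all). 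The reason the extrapolate-and-fold scheme fails is that the generic closed forms of the integrals are not valid at $k=1$: several of them carry factors $\tfrac{1}{k-1}$, and the zero-frequency Fourier coefficients of the logarithmic kernels are not limits of the patterns $-\tfrac{2\pi}{|n|}$ and $-\tfrac{2\pi}{|n|}\rho^{|n|}$ but the constants $-4\pi\log 2$ and $2\pi\log\tfrac{(1+r)^2}{2}$. This is precisely why the paper recomputes ten terms from scratch in the $k=1$ case ($I_{122}$, $I_{24}$, $I_{411}$, $I_{412}$, $I_{421}$, $I_{422}$, $I_{513}$, $I_{514}$, $I_{523}$, $I_{524}$). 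So for $k=1$ you must re-evaluate the integrals themselves; as written, your proposal would output the wrong $y_1$.
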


\begin{proof}

By linearity, we can calculate with $h(x) = \cos(kx)$ and sum over all contributions. We start with $I_1$. Using that

\begin{align*}
 \sin(x-y) & = 2\sin\left(\frac{x-y}{2}\right)\cos\left(\frac{x-y}{2}\right) \\
\cos(x)-\cos(y) & = (-2)\sin\left(\frac{x-y}{2}\right)\sin\left(\frac{x+y}{2}\right),
\end{align*}

$I_1$ reads:

\begin{align*}
 I_{1} & = 4r\int_{0}^{2\pi}\frac{\cos\left(\frac{x-y}{2}\right)\sin\left(\frac{x+y}{2}\right)(\cos(x)h(x)-\cos(y)h(y))}{((1+r^2)+(r^2-1)\cos(x+y))}dy \\
& = 4r\int_{0}^{2\pi}\frac{\cos\left(\frac{2x-y}{2}\right)\sin\left(\frac{y}{2}\right)(\cos(x)h(x)-\cos(y-x)h(y-x))}{((1+r^2)+(r^2-1)\cos(y))}dy \\
\end{align*}

and we split it into $I_{11} + I_{12}$, where:

\begin{align*}
 I_{11} & = 2r\int_{0}^{2\pi}\frac{\cos(x)\sin(y)(\cos(x)h(x)-\cos(y-x)h(y-x))}{((1+r^2)+(r^2-1)\cos(y))}dy \\
 I_{12} & = 2r\int_{0}^{2\pi}\frac{\sin(x)(1-\cos(y))(\cos(x)h(x)-\cos(y-x)h(y-x))}{((1+r^2)+(r^2-1)\cos(y))}dy \\
\end{align*}

Then, $I_{11} = I_{111} + I_{112}$:

\begin{align*}
 I_{111} & = 2r\int_{0}^{2\pi}\frac{\cos(x)\sin(y)\cos(x)h(x)}{((1+r^2)+(r^2-1)\cos(y))}dy = 0 \\
I_{112} & = -2r\int_{0}^{2\pi}\frac{\cos(x)\sin(y)\cos(y-x)h(y-x)}{((1+r^2)+(r^2-1)\cos(y))}dy \\
& = -r\int_{0}^{2\pi}\frac{\cos(x)\sin(y)\sin((k+1)x)\sin((k+1)y)}{((1+r^2)+(r^2-1)\cos(y))}dy \\
&  -r\int_{0}^{2\pi}\frac{\cos(x)\sin(y)\sin((k-1)x)\sin((k-1)y)}{((1+r^2)+(r^2-1)\cos(y))}dy \\
& = \frac{r}{2}\cos(x)\sin((k+1)x)\int_{0}^{2\pi}\frac{\cos((k+2)y)}{((1+r^2)+(r^2-1)\cos(y))}dy \\
& - \frac{r}{2}\cos(x)\sin((k+1)x)\int_{0}^{2\pi}\frac{\cos(ky)}{((1+r^2)+(r^2-1)\cos(y))}dy \\
& + \frac{r}{2}\cos(x)\sin((k-1)x)\int_{0}^{2\pi}\frac{\cos(ky)}{((1+r^2)+(r^2-1)\cos(y))}dy \\
& - \frac{r}{2}\cos(x)\sin((k-1)x)\int_{0}^{2\pi}\frac{\cos((k-2)y)}{((1+r^2)+(r^2-1)\cos(y))}dy \\
& = \frac{r}{2}\cos(x)\sin((k+1)x)\frac{\pi}{r}\left(\frac{1-r}{1+r}\right)^{k+2} - \frac{r}{2}\cos(x)\sin((k+1)x)\frac{\pi}{r}\left(\frac{1-r}{1+r}\right)^{k} \\
& + \frac{r}{2}\cos(x)\sin((k-1)x)\frac{\pi}{r}\left(\frac{1-r}{1+r}\right)^{k}  - \frac{r}{2}\cos(x)\sin((k-1)x)\frac{\pi}{r}\left(\frac{1-r}{1+r}\right)^{k-2}
\end{align*}

We now split $I_{12} = I_{121} + I_{122}$:

\begin{align*}
 I_{121} & = 2r\int_{0}^{2\pi}\frac{\sin(x)(1-\cos(y))\cos(x)\cos(kx)}{((1+r^2)+(r^2-1)\cos(y))}dy \\
& = 2r\cos(x)\cos(kx)\sin(x)\int_{0}^{2\pi}\frac{(1-\cos(y))}{((1+r^2)+(r^2-1)\cos(y))}dy \\
& = 2r\cos(x)\cos(kx)\sin(x)\frac{\pi}{r}\frac{2r}{1+r} \\
 I_{122} & = -2r\int_{0}^{2\pi}\frac{\sin(x)(1-\cos(y))\cos(y-x)\cos(k(y-x))}{((1+r^2)+(r^2-1)\cos(y))}dy \\
& = -r\sin(x)\int_{0}^{2\pi}\frac{(1-\cos(y))\cos((k+1)(y-x))}{((1+r^2)+(r^2-1)\cos(y))}dy \\
& -r\sin(x)\int_{0}^{2\pi}\frac{(1-\cos(y))\cos((k-1)(y-x))}{((1+r^2)+(r^2-1)\cos(y))}dy \\
& = -r\sin(x)\cos((k+1)x)\int_{0}^{2\pi}\frac{(1-\cos(y))\cos((k+1)y)}{((1+r^2)+(r^2-1)\cos(y))}dy \\
& -r\sin(x)\cos((k-1)x)\int_{0}^{2\pi}\frac{(1-\cos(y))\cos((k-1)y)}{((1+r^2)+(r^2-1)\cos(y))}dy \\
& = 2r\sin(x)\cos((k+1)x)\frac{\pi}{r}\left(\frac{1-r}{1+r}\right)^{k}\left(\frac{r}{r+1}\right)^{2} \\
& +2r\sin(x)\cos((k-1)x)\frac{\pi}{r}\left(\frac{1-r}{1+r}\right)^{k-2}\left(\frac{r}{r+1}\right)^{2} \\
\end{align*}

We move on to $I_2$. Using that

\begin{align*}
 \sin(x-y) & = 2\sin\left(\frac{x-y}{2}\right)\cos\left(\frac{x-y}{2}\right) \\
\sin(x)-\sin(y) & = 2\sin\left(\frac{x-y}{2}\right)\cos\left(\frac{x+y}{2}\right),
\end{align*}

$I_2$ reads:

\begin{align*}
I_{2} & = -4r^2\int_{0}^{2\pi}\frac{\cos\left(\frac{x-y}{2}\right)\cos\left(\frac{x+y}{2}\right)(h(x)\sin(x)-h(y)\sin(y))}{((1+r^2)+(r^2-1)\cos(x+y))}dy \\
& = -2r^2\int_{0}^{2\pi}\frac{(\cos(x)+\cos(y))(h(x)\sin(x)-h(y)\sin(y))}{((1+r^2)+(r^2-1)\cos(x+y))}dy \\
\end{align*}

We split $I_2$ into $I_{21} + I_{22} + I_{23} + I_{24}$, where:

\begin{align*}
 I_{21} & = -2r^2\int_{0}^{2\pi}\frac{\cos(x)h(x)\sin(x)}{((1+r^2)+(r^2-1)\cos(x+y))}dy \\
I_{22} & = 2r^2\int_{0}^{2\pi}\frac{\cos(x)h(y)\sin(y)}{((1+r^2)+(r^2-1)\cos(x+y))}dy \\
I_{23} & = -2r^2\int_{0}^{2\pi}\frac{\cos(y)h(x)\sin(x)}{((1+r^2)+(r^2-1)\cos(x+y))}dy \\
I_{24} & = 2r^2\int_{0}^{2\pi}\frac{\cos(y)h(y)\sin(y)}{((1+r^2)+(r^2-1)\cos(x+y))}dy \\
\end{align*}

We obtain, on the one hand:

\begin{align*}
 I_{21} & = -2r^2\cos(kx)\cos(x)\sin(x)\frac{\pi}{r} \\
I_{22} & = r^2\cos(x)\int_{0}^{2\pi}\frac{\sin((k+1)(y-x))}{((1+r^2)+(r^2-1)\cos(y))}dy \\
&  -r^2\cos(x)\int_{0}^{2\pi}\frac{\sin((k-1)(y-x))}{((1+r^2)+(r^2-1)\cos(y))}dy \\
& = -r^2\sin((k+1)x)\cos(x)\int_{0}^{2\pi}\frac{\cos((k+1)y)}{((1+r^2)+(r^2-1)\cos(y))}dy \\
&  +r^2\cos(x)\sin((k-1)x)\int_{0}^{2\pi}\frac{\cos((k-1)y))}{((1+r^2)+(r^2-1)\cos(y))}dy \\
& = -r^2\sin((k+1)x)\cos(x)\frac{\pi}{r}\left(\frac{1-r}{1+r}\right)^{k+1} \\
&  +r^2\cos(x)\sin((k-1)x)\frac{\pi}{r}\left(\frac{1-r}{1+r}\right)^{k-1}
\end{align*}

On the other:

\begin{align*}
I_{23} & = -2r^2\cos(kx)\sin(x)\cos(x)\frac{\pi}{r}\left(\frac{1-r}{1+r}\right) \\
I_{24} & = \frac12 r^2\int_{0}^{2\pi}\frac{\sin((k+2)(y-x))}{((1+r^2)+(r^2-1)\cos(y))}dy \\
& -\frac12 r^2\int_{0}^{2\pi}\frac{\sin((k-2)(y-x))}{((1+r^2)+(r^2-1)\cos(y))}dy \\
& = -\frac12 r^2\sin((k+2)x)\frac{\pi}{r}\left(\frac{1-r}{1+r}\right)^{k+2} \\
& +\frac12 r^2\sin((k-2)x)\frac{\pi}{r}\left(\frac{1-r}{1+r}\right)^{k-2} \\
\end{align*}

The next term is $I_3$. We start splitting into $I_{31} + I_{32}$, where:

\begin{align*}
I_{31} & = \int_{0}^{2\pi}\log\left(\sin^{2}\left(\frac{x-y}{2}\right)\right)h(x)(-r\cos(y)\sin(x)+\cos(x)\sin(y))dy \\
I_{32} & = \int_{0}^{2\pi}\log\left((1+r^2)+(r^2-1)\cos(x+y))\right)h(x)(-r\cos(y)\sin(x)+\cos(x)\sin(y))dy \\
\end{align*}

We further split $I_{31}$ into:

\begin{align*}
 I_{311} & = -r\cos(kx)\sin(x)\int_{0}^{2\pi}\log\left(\sin^{2}\left(\frac{y}{2}\right)\right)\cos(x-y)dy \\
  & = r\cos(kx)\sin(x)\cos(x)(2\pi) \\
I_{312} & = \cos(kx)\cos(x)\int_{0}^{2\pi}\log\left(\sin^{2}\left(\frac{y}{2}\right)\right)\sin(x-y)dy \\
& = \cos(kx)\cos(x)\sin(x)(-2\pi) \\
\end{align*}

and $I_{32}$ into:

\begin{align*}
I_{321} & = -r\cos(kx)\sin(x)\int_{0}^{2\pi}\log\left((1+r^2)+(r^2-1)\cos(y))\right)\cos(y-x)dy \\
& = -r\cos(kx)\sin(x)\cos(x)(-2\pi)\left(\frac{1-r}{1+r}\right) \\
I_{322} & = \cos(kx)\cos(x)\int_{0}^{2\pi}\log\left((1+r^2)+(r^2-1)\cos(y))\right)\sin(y-x)dy \\
& = -\sin(x)\cos(kx)\cos(x)(-2\pi)\left(\frac{1-r}{1+r}\right) \\
\end{align*}

This finishes $I_3$. The next term is $I_4 = I_{41} + I_{42}$:

\begin{align*}
I_{41} & = \int_{0}^{2\pi}\log\left(\sin^{2}\left(\frac{x-y}{2}\right)\right)h(y)(-\cos(y)\sin(x)+r\cos(x)\sin(y))dy \\
I_{42} & = \int_{0}^{2\pi}\log\left(((1+r^2)+(r^2-1)\cos(x+y))\right)h(y)(-\cos(y)\sin(x)+r\cos(x)\sin(y))dy \\
\end{align*}

We start computing $I_{41} = I_{411} + I_{412}$:

\begin{align*}
 I_{411} & = -\frac12\sin(x)\int_{0}^{2\pi}\log\left(\sin^{2}\left(\frac{y}{2}\right)\right)\cos((k+1)(x-y))dy \\
& -\frac12\sin(x)\int_{0}^{2\pi}\log\left(\sin^{2}\left(\frac{y}{2}\right)\right)\cos((k-1)(x-y))dy \\
 & = -\frac12\sin(x)\cos((k+1)x)(-2\pi)\frac{1}{k+1} \\
& -\frac12\sin(x)\cos((k-1)x)(-2\pi)\frac{1}{k-1} \\
I_{412} & = \frac12 r\cos(x)\int_{0}^{2\pi}\log\left(\sin^{2}\left(\frac{y}{2}\right)\right)\sin((k+1)(x-y))dy \\
& -\frac12 r\cos(x)\int_{0}^{2\pi}\log\left(\sin^{2}\left(\frac{y}{2}\right)\right)\sin((k-1)(x-y))dy \\
& = -\frac12 r\cos(x)\sin((k+1)x)(2\pi)\frac{1}{k+1}\\
& +\frac12 r\cos(x)\sin((k-1)x)(2\pi)\frac{1}{k-1}
\end{align*}

We now split $I_{42}$ into $I_{421} + I_{422}$, where:

\begin{align*}
I_{421} & = -\frac12\sin(x)\int_{0}^{2\pi}\log\left(((1+r^2)+(r^2-1)\cos(y))\right)\cos((k+1)(y-x))dy \\
 & -\frac12\sin(x)\int_{0}^{2\pi}\log\left(((1+r^2)+(r^2-1)\cos(y))\right)\cos((k-1)(y-x))dy \\ 
& = \frac12\sin(x)\cos((k+1)x)\frac{2\pi}{k+1}\left(\frac{1-r}{r+1}\right)^{k+1} \\
 & +\frac12\sin(x)\cos((k-1)x)\frac{2\pi}{k-1}\left(\frac{1-r}{r+1}\right)^{k-1} \\
I_{422} & = \frac12 r\cos(x)\int_{0}^{2\pi}\log\left(((1+r^2)+(r^2-1)\cos(y))\right)\sin((k+1)(y-x))dy \\
 &  - \frac12 r\cos(x)\int_{0}^{2\pi}\log\left(((1+r^2)+(r^2-1)\cos(y))\right)\sin((k-1)(y-x))dy \\
& = \frac12 r\cos(x)\sin((k+1)x)\frac{2\pi}{k+1}\left(\frac{1-r}{r+1}\right)^{k+1} \\
 &  - \frac12 r\cos(x)\sin((k-1)x)\frac{2\pi}{k-1}\left(\frac{1-r}{r+1}\right)^{k-1}.
\end{align*}

This concludes with $I_{42}$ and therefore $I_{4}$. Finally, we move on to $I_{5} = I_{51} + I_{52}$, with:

\begin{align*}
I_{51} & = \int_{0}^{2\pi}\log\left(\sin^{2}\left(\frac{x-y}{2}\right))\right)(h'(x)-h'(y))(r\cos(x)\cos(y)+\sin(x)\sin(y))dy \\ 
I_{52} & = \int_{0}^{2\pi}\log\left(((1+r^2)+(r^2-1)\cos(x+y))\right)(h'(x)-h'(y))(r\cos(x)\cos(y)+\sin(x)\sin(y))dy \\
\end{align*}

We now decompose $I_{51}$ as $I_{511} + I_{512} + I_{513} + I_{514}$, giving:

\begin{align*}
I_{511} & = -kr\sin(kx)\cos(x)\int_{0}^{2\pi}\log\left(\sin^{2}\left(\frac{y}{2}\right))\right)\cos(x-y)dy \\ 
& = kr\sin(kx)\cos(x)\cos(x)(2\pi) \\
I_{512} & = -k\sin(kx)\sin(x)\int_{0}^{2\pi}\log\left(\sin^{2}\left(\frac{y}{2}\right))\right)\sin(x-y)dy \\ 
& = k\sin(kx)\sin(x)\sin(x)(2\pi) \\
I_{513} & = \frac12 rk\cos(x)\int_{0}^{2\pi}\log\left(\sin^{2}\left(\frac{y}{2}\right))\right)\sin((k+1)(x-y))dy \\ 
& + \frac12 rk\cos(x)\int_{0}^{2\pi}\log\left(\sin^{2}\left(\frac{y}{2}\right))\right)\sin((k-1)(x-y))dy \\ 
& = -\frac12 rk\cos(x)\sin((k+1)x)\frac{2\pi}{k+1} \\
& - \frac12 rk\cos(x)\sin((k-1)x)\frac{2\pi}{k-1} \\
I_{514} & = \frac12 k\sin(x)\int_{0}^{2\pi}\log\left(\sin^{2}\left(\frac{y}{2}\right))\right)\cos((k-1)(x-y))dy \\ 
& - \frac12 k\sin(x)\int_{0}^{2\pi}\log\left(\sin^{2}\left(\frac{y}{2}\right))\right)\cos((k+1)(x-y))dy \\ 
& = -\frac12 k\sin(x)\cos((k-1)x)\frac{2\pi}{k-1} \\
& + \frac12 k\sin(x)\cos((k+1)x)\frac{2\pi}{k+1}
\end{align*}

The last term is $I_{52} = I_{521} + I_{522} + I_{523} + I_{524}$:

\begin{align*}
I_{521} & = -kr\sin(kx)\cos(x)\int_{0}^{2\pi}\log\left(((1+r^2)+(r^2-1)\cos(y))\right)\cos(y-x)dy \\
& = kr\sin(kx)\cos(x)\cos(x)(2\pi)\left(\frac{1-r}{1+r}\right) \\
I_{522} & = -k\sin(kx)\sin(x)\int_{0}^{2\pi}\log\left(((1+r^2)+(r^2-1)\cos(y))\right)\sin(y-x)dy \\
& = -k\sin(kx)\sin(x)\sin(x)(2\pi)\left(\frac{1-r}{1+r}\right) \\
I_{523} & = \frac12 kr\cos(x)\int_{0}^{2\pi}\log\left(((1+r^2)+(r^2-1)\cos(y))\right)\sin((k+1)(y-x))dy \\
& + \frac12 kr\cos(x)\int_{0}^{2\pi}\log\left(((1+r^2)+(r^2-1)\cos(y))\right)\sin((k-1)(y-x))dy \\
& = \frac12 kr\cos(x)\sin((k+1)x)\frac{2\pi}{k+1}\left(\frac{1-r}{1+r}\right)^{k+1} \\
& + \frac12 kr\cos(x)\sin((k-1)x)\frac{2\pi}{k-1}\left(\frac{1-r}{1+r}\right)^{k-1} \\
I_{524} & = -\frac12 k\sin(x)\int_{0}^{2\pi}\log\left(((1+r^2)+(r^2-1)\cos(y))\right)\cos((k+1)(y-x))dy \\
& +\frac12 k\sin(x)\int_{0}^{2\pi}\log\left(((1+r^2)+(r^2-1)\cos(y))\right)\cos((k-1)(y-x))dy \\
& = \frac12 k\sin(x)\cos((k+1)x)\frac{2\pi}{k+1}\left(\frac{1-r}{1+r}\right)^{k+1} \\
& -\frac12 k\sin(x)\cos((k-1)x)\frac{2\pi}{k-1}\left(\frac{1-r}{1+r}\right)^{k-1}.
\end{align*}

In the case $k = 1$, most of the terms are equal, other than the following, that yield:

\begin{align*}
 I_{122} & = 2r\sin(x)\cos((k+1)x)\frac{\pi}{r}\left(\frac{1-r}{1+r}\right)^{k}\left(\frac{r}{r+1}\right)^{2} \\
& -2\sin(x)\frac{\pi r}{1+r} \\
I_{24} & = -\frac12 r^2\sin((k+2)x)\frac{\pi}{r}\left(\frac{1-r}{1+r}\right)^{k+2} +\frac12 r^2\sin((k-2)x)\frac{\pi}{r}\left(\frac{1-r}{1+r}\right) \\
I_{411} & = -\frac12\sin(x)\cos((k+1)x)(-2\pi)\frac{1}{k+1} -\frac12\sin(x)(-4\pi \log(2)) \\
I_{412} & = -\frac12 r\cos(x)\sin((k+1)x)(2\pi)\frac{1}{k+1}\\
I_{421} & = \frac12\sin(x)\cos((k+1)x)\frac{2\pi}{k+1}\left(\frac{1-r}{r+1}\right)^{k+1} -\frac12\sin(x)\left(2\pi \log\left(\frac{(1+r)^2}{2}\right)\right) \\
I_{422} & = \frac12 r\cos(x)\sin((k+1)x)\frac{2\pi}{k+1}\left(\frac{1-r}{r+1}\right)^{k+1} \\
I_{513} & = -\frac12 rk\cos(x)\sin((k+1)x)\frac{2\pi}{k+1} \\
I_{514} & = \frac12 k\sin(x)(-4\pi \log(2)) + \frac12 k\sin(x)\cos((k+1)x)\frac{2\pi}{k+1} \\
I_{523} & = \frac12 kr\cos(x)\sin((k+1)x)\frac{2\pi}{k+1}\left(\frac{1-r}{1+r}\right)^{k+1} \\
I_{524} & = \frac12 k\sin(x)\cos((k+1)x)\frac{2\pi}{k+1}\left(\frac{1-r}{1+r}\right)^{k+1} +\frac12 k\sin(x)\left(2\pi \log\left(\frac{(1+r)^2}{2}\right)\right),
\end{align*}

where the calculations are carried out the same way as in the case $k>1$. Adding up all the contributions, we get the desired result.

\end{proof}

We can prove the following:

\begin{lemma}
\label{lemaraices}
For every $k > 2$:
\begin{itemize}
 \item There exists a unique $0 < r(k) < 1$ such that all of $x_{k}, y_{k}, z_{k}$ are equal to zero. 
\item The root $r(k)$ is a simple root.
\item Moreover, $x_{j}, y_{j}, z_{j}$ will be nonzero for all $j \neq k$.
\item If $k_1 > k_2$, then $r(k_1) < r(k_2)$.
\end{itemize}
\end{lemma}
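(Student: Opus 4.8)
The plan is to notice that $x_k$, $y_k$, and $z_k$ all carry the same analytic factor
\[
g_k(r) := -1 - 2r + 2kr - r^2 - \frac{(1-r)^k}{(1+r)^{k-2}},
\]
their respective prefactors $-\tfrac{1-r}{8(1+r)^2}$, $\tfrac{1}{4(1+r)}$, $-\tfrac{1-r}{8(1+r)^2}$ being nonzero throughout $(0,1)$. Hence on $(0,1)$ the three quantities vanish simultaneously precisely at the zeros of $g_k$, they are simple there exactly when $g_k$ has a simple zero, and they are nonzero exactly where $g_k\neq 0$. The entire lemma therefore reduces to the single scalar function $g_k$.

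First I would settle existence, uniqueness and simplicity. For $k>2$ the function $g_k$ is smooth on $[0,1]$, and a direct evaluation gives $g_k(0)=-2<0$ and $g_k(1)=2(k-2)>0$, so the intermediate value theorem produces at least one root in $(0,1)$. To pin it down I would differentiate; organizing the derivative of the rational-power term so that the self-similar factor $\left(\tfrac{1-r}{1+r}\right)^{k-1}$ appears yields
\[
g_k'(r) = 2(k-1-r) + 2\left(\frac{1-r}{1+r}\right)^{k-1}(k-1+r).
\]
For $r\in(0,1)$ and $k>2$ one has $k-1-r>k-2>0$ and $k-1+r>0$, so both summands are strictly positive and $g_k'(r)>0$ on $(0,1)$. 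Strict monotonicity forces the root to be unique, call it $r(k)$, and $g_k'(r(k))>0$ shows the root is simple; since the prefactors are nonzero, $r(k)$ is likewise a simple root of each of $x_k,y_k,z_k$. This establishes the first two bullets.

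For the monotonicity in $k$ (fourth bullet) I would use a pointwise comparison instead of the implicit function theorem. Telescoping the two terms gives, for $r\in(0,1)$,
\[
g_{k+1}(r) - g_k(r) = 2r\left(1 + \frac{(1-r)^k}{(1+r)^{k-1}}\right) > 0.
\]
Summing over consecutive indices yields $g_{k_1}(r) > g_{k_2}(r)$ on $(0,1)$ whenever $k_1>k_2$. Evaluating at $r=r(k_2)$ gives $g_{k_1}(r(k_2)) > g_{k_2}(r(k_2)) = 0$, and since $g_{k_1}$ is strictly increasing with unique root $r(k_1)$, this forces $r(k_1)<r(k_2)$, which is the claimed strict monotonicity. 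The third bullet then comes for free: each $g_j$ has a unique root $r(j)$ in $(0,1)$, and by the monotonicity just proved $r(j)\neq r(k)$ for $j\neq k$, so $g_j(r(k))\neq 0$ and thus $x_j,y_j,z_j$ are all nonzero at $r=r(k)$.

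The only genuine work lies in the two sign assertions, namely $g_k'(r)>0$ and $g_{k+1}(r)-g_k(r)>0$ on $(0,1)$. The main obstacle is purely bookkeeping: the derivative of $\tfrac{(1-r)^k}{(1+r)^{k-2}}$ must be arranged so that the common factor $\left(\tfrac{1-r}{1+r}\right)^{k-1}$ is factored out and the bracket simplifies to $-2(k-1+r)$; once the expression is in that form, positivity of each summand is immediate, and no further subtlety beyond tracking the exponents $k-2,\,k-1,\,k$ is expected.
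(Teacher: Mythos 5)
Your proof is correct, and it reaches the conclusion by a genuinely different route at both key steps. For uniqueness and simplicity, the paper changes variables $r=\frac{z-1}{z+1}$, rewrites the bracket as $B(z)=\frac{2}{(1+z)^2}\left((2-k)z^2+k+2z^{2-k}\right)$ on $1<z<\infty$, gets existence from Bolzano's theorem, and gets uniqueness and simplicity from Descartes' rule of signs applied to $z^{k-2}(1+z)^2B(z)$; you instead compute the derivative directly, and your formula $g_k'(r)=2(k-1-r)+2\left(\frac{1-r}{1+r}\right)^{k-1}(k-1+r)$ is correct, so strict monotonicity of $g_k$ on $(0,1)$ gives uniqueness and simplicity in one stroke --- more elementary and self-contained, at the cost of a slightly longer computation. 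For the monotonicity of the roots, the paper differentiates the bracket in $k$ as a \emph{continuous} parameter and uses $\log\frac{1-r}{1+r}<0$, whereas you telescope the discrete difference $g_{k+1}(r)-g_k(r)=2r\left(1+\frac{(1-r)^k}{(1+r)^{k-1}}\right)>0$ (also verified correct); both yield the same pointwise comparison, and then the same root-location argument, with your version staying entirely within integer indices. One small caveat on the third bullet: your justification ``each $g_j$ has a unique root $r(j)\in(0,1)$'' is only true for $j>2$; for $j=1,2$ one has $g_1\equiv-2$ and $g_2(r)=-2(1-r)^2<0$ on $(0,1)$, and $y_1=-\frac{3r+1}{4(1+r)^2}$ is nonzero outright, so these indices have no root at all and the conclusion $x_j,y_j,z_j\neq 0$ still holds --- this deserves one explicit sentence, though the paper's own treatment of that bullet is equally terse.
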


\begin{proof}
 It is clear that the restriction in the size of $r$ reduces the problem to study the bracket 

\begin{align*}
 \left(-1 - 2 r + 2 k r - r^2 - \frac{(1-r)^{k}}{(1+r)^{k-2}}\right)
\end{align*}

We first do the change of variables $r = \frac{z-1}{z+1}$. The bracket reads:

\begin{align*}
 B(z) = \frac{2}{(1+z)^2}\left((2-k)z^2 + k + 2 z^{2-k}\right),
\end{align*}

where the range of study is $1 < z < \infty$. Moreover, the change of variables is a bijection so it is enough to show it for the bracket written in the $z$ variables.

The existence of a solution is given by Bolzano's theorem since $B(1) = 2, \lim_{z \to \infty} B(z) = 4-2k$. The uniqueness follows from Descartes' rule of signs applied to $z^{k-2}(1+z)^2B(z)$: there is only a change of signs so there is only a positive root (which has to be the one found before). This also shows that the root is simple.

Finally, $x_{j}, y_{j}, z_{j}$ are clearly non-zero for $j \neq k$ since none of the factors other than the bracket could produce a zero at $r = r(k)$.

To conclude, we observe that for every $0 < r < 1$:

\begin{align*}
\pa_{k} \left(-1 - 2 r + 2 k r - r^2 - \frac{(1-r)^{k}}{(1+r)^{k-2}}\right)
= 2r - 2\left(\frac{1-r}{1+r}\right)^{k} \log\left(\frac{1-r}{1+r}\right) > 0,
\end{align*}

from which the monotonicity of the roots follows.

\end{proof}

\begin{rem}
 Some values of $r(k)$ were calculated numerically for $k = 4, k = 5$ in \cite{Kamm:thesis-shape-stability-patches}. %We get the same results for those values.
\end{rem}

\begin{defi}
 For every $k$, let $K_{k}(r) = -\frac{1-r}{8(r+1)^2}\left(-1 - 2 r + 2 k r - r^2 - \frac{(1-r)^{k}}{(1+r)^{k-2}}\right)$ so that

\begin{align*}
 x_{k} = K_{k}(r), \quad y_{k} = -K_{k}(r)2\frac{1+r}{1-r} = -2 K_{k}(r) z, \quad z_{k} = K_{k}(r).
\end{align*}

\end{defi}

\begin{lemma}
\label{lemanucleo}

Let $k>2$. Then Ker$(DF)(r(k),0)[h]$ is one dimensional.

\end{lemma}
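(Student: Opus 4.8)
The plan is to convert the kernel equation into the explicit three-term recurrence for the cosine coefficients produced by the lemma computing $DF(0,r)[h]$, and then to determine its solution space under the twin constraints coming from the boundary behaviour at low frequencies and from the exponential decay forced by analyticity in the strip.

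Writing $h(x)=\sum_{j\geq 1}a_j\cos(jx)$, that lemma gives $h\in\mathrm{Ker}\,DF(r(k),0)$ if and only if
$$x_n a_{n-2}+y_n a_n+z_n a_{n+2}=0\qquad (n\geq 1,\ a_j=0\text{ for }j\leq 0).$$
Since the recurrence couples only indices of equal parity, it splits into the class $n\equiv k\ (\mathrm{mod}\ 2)$ and its complement; I would work in the parity class of $k$ (the natural one, matching the target spaces $X^{k,odd}_c$ and $X^{k,even}_c$), treating the other class at the end. By the Definition of $K_k(r)$ together with Lemma \ref{lemaraices}, at $r=r(k)$ one has $x_k=y_k=z_k=0$ while $x_j,y_j,z_j\neq 0$ for every $j\neq k$, so the single equation $n=k$ becomes vacuous and decouples the chain into a finite lower block $(\,\ell,\ell+2,\dots,k\,)$, with $\ell\in\{1,2\}$, and an infinite upper block $(k,k+2,k+4,\dots)$.

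For the lower block I would solve forward: as $z_n\neq 0$ for $\ell\leq n\leq k-2$, the lowest coefficient $a_\ell$ is free and determines $a_{\ell+2},\dots,a_k$ linearly, the equation at $n=k-2$ pinning $a_k=\lambda a_\ell$ for a fixed constant $\lambda=\lambda(k,r(k))$; thus the lower block is one-dimensional. The upper block is where the real work lies, and I expect it to be the main obstacle. There the recurrence is a genuine second-order difference equation, and by the Definition $x_n,z_n=K_n(r)$ grow linearly while $y_n=-2K_n(r)\,z$ with $z=\frac{1+r}{1-r}>1$; dividing by $K_n$, the limiting equation $a_{n-2}-2z\,a_n+a_{n+2}=0$ has characteristic roots $w_\pm=z\pm\sqrt{z^2-1}$ with $0<w_-<1<w_+$ and $w_+w_-=1$. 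By a Poincar\'e--Perron argument the tail solutions form a one-dimensional recessive family decaying like $w_-^{n/2}$ together with dominant solutions growing like $w_+^{n/2}$; membership in $\mathcal{C}_w(c)$ forces $|a_n|\lesssim e^{-cn}$, so for $c$ small enough (namely $c<\tfrac12\log w_+$) the recessive family is admissible and every solution carrying a nonzero dominant component is excluded. The delicate point is to make this selection rigorous from the asymptotics of $K_n(r)$, i.e. to prove that the decaying solution is unique up to scale and that the dominant direction is genuinely ruled out by the decay.

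Granting this, the upper block fixes $a_k,a_{k+2},a_{k+4},\dots$ up to a single scalar, and matching with the lower block at the shared coefficient $a_k$ (using $a_k=\lambda a_\ell$; the degenerate sub-cases $\lambda=0$ or recessive $a_k=0$ are disposed of directly and force the sequence to vanish unless it is the one already found) determines the whole sequence up to one overall constant. Hence the kernel in the parity class of $k$ is one-dimensional, generated by an $h_0$ whose leading mode is $\cos(kx)$ accompanied by the forced lower frequencies and a decaying tail; in particular $\cos(kx)$ alone is not in the kernel, since the $n=k-2$ equation leaves the nonzero term $z_{k-2}a_k$. Finally, for the complementary parity class every $x_j,y_j,z_j$ is nonzero, so forward substitution from the lowest mode yields a solution with a nonzero dominant component, which the decay requirement kills; that class contributes nothing, and therefore $\mathrm{Ker}\,DF(r(k),0)$ is one-dimensional.
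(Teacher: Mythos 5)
Your proposal is correct and is essentially the paper's own argument: reduce the kernel equation to the three-term recurrence, note that at $r=r(k)$ the row $n=k$ becomes vacuous while every other row, divided by $K_n(r(k))\neq 0$, is exactly $a_{n-2}-2z\,a_n+a_{n+2}=0$, solve the block below $k$ by forward substitution from the boundary condition, and retain only the decaying characteristic solution in the tail --- which is precisely how the paper constructs $h_0$ (coefficients $\lambda_{+}^{p}-\lambda_{-}^{p}$ below the bifurcation frequency and $(\lambda_{+}^{k}-\lambda_{-}^{k})\lambda_{-}^{p-k}$ above it) and proves uniqueness by excluding exponentially growing tails. The only substantive comment is that the step you single out as ``the main obstacle'' is not delicate at all: since $x_n=z_n=K_n$ and $y_n=-2zK_n$ hold exactly (for $n\geq 2$), dividing by $K_n$ produces a genuinely constant-coefficient recurrence rather than a limiting one, so no Poincar\'e--Perron asymptotics are needed --- the tail solutions are exactly $\alpha\lambda_{+}^{(n-k)/2}+\beta\lambda_{-}^{(n-k)/2}$, and membership in $X^{k}_{c}$ (for $c<\tfrac{1}{2}\log\lambda_{+}$) kills $\alpha$ outright.
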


\begin{proof}
 We will first show the existence of a nontrivial kernel, calculating a non-zero generator, then show uniqueness (up to scalar multiples) of the generator. Let $z = z(k) = \frac{1+r(k)}{1-r(k)}$.

We will discuss the case where $k$ is even, the odd case being proved similarly.

Let $\lambda_{+} = z + \sqrt{z^2-1}, \lambda_{-} = z - \sqrt{z^{2}-1}$. We remark that $\lambda_{+} > 1 > \lambda_{-}$. We will define $h_{0}$ (the generator of the kernel) in the following way:

\begin{align*}
 h_{0}(x) = \sum_{p=1}^{\infty}c_{p}\cos(2px),
\end{align*}

where
\begin{align*}
 c_{p} = \left\{
\begin{array}{cc}
 \lambda_{+}^{p} - \lambda_{-}^{p} & \text{ if } 1 \leq p \leq k \\
 (\lambda_{+}^{k} - \lambda_{-}^{k})\lambda_{-}^{p-k} & \text{ if } k \leq p < \infty
\end{array}
\right\}
\end{align*}

Then it is immediate to check that

\begin{align*}
 c_{p-1} - 2z c_{p} + c_{p+1} & = 0,
\end{align*}

for every $p \neq k$, with the convention that $c_{0} = 0$, since for $p < k$:

\begin{align*}
 c_{p-1} - 2z c_{p} + c_{p+1} & = \lambda_{+}^{p-1}(1+\lambda_{+}^{2} - 2z\lambda_{+}) - \lambda_{-}^{p-1}(1+\lambda_{-}^{2} - 2z\lambda_{-}) = 0,
\end{align*}

and for $p > k$:

\begin{align*}
 c_{p-1} - 2z c_{p} + c_{p+1} & = (\lambda_{+}^{k} - \lambda_{-}^{k})\lambda_{-}^{p-1-k}(\lambda_{-}^{2} + 1 - 2z \lambda_{-}) = 0.
\end{align*}

We remark that $h_0 \in X_{c}^{k}$ for a small enough $c$ since the coefficients $c_{p}$ decay exponentially fast. Next, we will see that if there are other choices of the coefficients $c_{p}$, they represent functions which are not in $X_{c}^{k}$. We will argue by contradiction. Suppose that there exists a function $h_{0}'$ which is represented by the coefficients $\{c_{p}'\}_{p=1}^{\infty}$. We distinguish two cases:

If $c_{k}' = 0$, this implies that $c_{p}' = 0$ for $p < k$. Moreover, let $c_{k+1}' = c$. If $c = 0$, then $c_{p}' = 0$ for all $p > k$. If $c \neq 0$, then $c_{p}' = c \frac{\lambda_{+}^{p-k} - \lambda_{-}^{p-k}}{\lambda_{+} - \lambda_{-}}$, which grows exponentially fast with $p$ and thus $h_{0}' \not \in X_{c}^{k}$.

If $c_{k}' \neq 0$, by normalizing we can take $c_{k}' = 1$. Let $c_{k+1}' = c$. Then the coefficients $c_{p}'$ are given for $p \geq k$ by

\begin{align*}
 c_{p}' = \frac{\lambda_{-} - c}{\lambda_{-} - \lambda_{+}}\lambda_{+}^{p-k} + \frac{c - \lambda_{+}}{\lambda_{-} - \lambda_{+}}\lambda_{-}^{p-k}
\end{align*}

which again behave exponentially with $p$, implying that $h_{0}' \not \in X_{c}^{k}$, unless $c = \lambda_{-}$. However, in that particular case the coefficients $c_{p}'$ are, for all $p$, multiples of all the previous $c_{p}$ found before. This concludes the uniqueness.

\end{proof}

\begin{rem}
\label{remarkfilanocero}
 We remark that $c_{k-1} - 2z c_{k} + c_{k+1} \neq 0$, since

\begin{align*}
 c_{k-1} - 2z c_{k} + c_{k+1} & = 
\lambda_{+}^{p-1}(1+\lambda_{+}\lambda_{-} - 2z\lambda_{+}) - \lambda_{-}^{p-1}(1+\lambda_{-}^{2} - 2z\lambda_{-}) \\
& = \lambda_{+}^{p}(\lambda_{-} - \lambda_{+}) \neq 0.
\end{align*}

\end{rem}

\begin{lemma}
\label{lemaKn}
 Let $k>2$, and let $r_k$ and $K_{n}$ be defined as before. Then $K_{n}(r_k) \sim n$ as $n \to \infty$.
\end{lemma}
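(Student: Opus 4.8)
The plan is to treat this as a direct asymptotic computation, since the parameter $r_k$ is \emph{fixed} (it is the root associated to the $k$-th mode from Lemma \ref{lemaraices}) and the only quantity sent to infinity is the subscript $n$. Recalling the definition
\begin{align*}
K_{n}(r_k) = -\frac{1-r_k}{8(r_k+1)^2}\left(-1 - 2 r_k + 2 n r_k - r_k^2 - \frac{(1-r_k)^{n}}{(1+r_k)^{n-2}}\right),
\end{align*}
I would isolate the bracket and analyze each of its terms as $n\to\infty$. The prefactor $-\frac{1-r_k}{8(r_k+1)^2}$ is a fixed nonzero constant because $0<r_k<1$ strictly, so all the asymptotic behavior comes from the bracket.

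First I would rewrite the exponential term as
\begin{align*}
\frac{(1-r_k)^{n}}{(1+r_k)^{n-2}} = (1+r_k)^2\left(\frac{1-r_k}{1+r_k}\right)^{n}.
\end{align*}
Since $r_k\in(0,1)$ we have $0<\frac{1-r_k}{1+r_k}<1$, so this term decays exponentially and in particular is $O(1)$ (indeed $o(1)$) as $n\to\infty$. The remaining constant pieces $-1-2r_k-r_k^2 = -(1+r_k)^2$ are of course $O(1)$ as well. Hence the bracket equals $2nr_k + O(1)$, the single term carrying the growth being the linear one $2nr_k$.

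Combining with the prefactor yields
\begin{align*}
K_{n}(r_k) = -\frac{1-r_k}{8(1+r_k)^2}\bigl(2nr_k + O(1)\bigr) = -\frac{(1-r_k)r_k}{4(1+r_k)^2}\,n + O(1),
\end{align*}
so that $K_{n}(r_k)$ is asymptotic to a fixed nonzero constant multiple of $n$, which is the assertion $K_{n}(r_k)\sim n$ (up to the constant $-\frac{(1-r_k)r_k}{4(1+r_k)^2}$, which is strictly negative since $0<r_k<1$). There is no genuine obstacle here: the only point requiring care is to confirm that the exponential contribution is negligible, which is guaranteed by the strict inequality $0<r_k<1$ established in Lemma \ref{lemaraices}, and to note that the leading coefficient does not degenerate, again a consequence of $r_k$ lying strictly inside $(0,1)$.
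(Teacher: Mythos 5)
Your proof is correct and takes essentially the same approach as the paper's: isolate the $n$-dependence of the bracket, observe that $\frac{(1-r_k)^{n}}{(1+r_k)^{n-2}} = (1+r_k)^2\left(\frac{1-r_k}{1+r_k}\right)^{n}$ decays exponentially since $0<r_k<1$, and conclude that the linear term $2nr_k$ dominates, giving growth of order $n$. Your additional remarks (the explicit leading constant and its negative sign, so that the lemma's ``$\sim n$'' is to be read as comparability of magnitude) are a harmless refinement of the same argument.
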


\begin{proof}
 The proof follows easily from the fact that the only dependence in $n$ is through the terms $2nr_k - (1+r_{k})^{2}\left(\frac{1-r_{k}}{1+r_{k}}\right)^{n}$ that appear in the bracket. The second term goes to zero since $\frac{1-r(k)}{1+r(k)} < 1$, hence the dominant one is $O(n)$.
\end{proof}

\begin{lemma}
Let $k>2$. Then $Y^{k-1}_{c}$/Range($DF(r(k),0)$) is one-dimensional.
\end{lemma}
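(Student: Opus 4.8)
The plan is to determine $\mathrm{Range}(DF(r(k),0))$ explicitly as a closed hyperplane of $Y^{k-1}_c$ and read off the missing direction. Writing $h=\sum_{n\geq 1}a_n\cos(nx)\in X^{k}_c$, the eigenvalue Lemma together with the identities $x_n=z_n=K_n(r)$ and $y_n=-2zK_n(r)$ (valid for $n\geq 2$) show that the $\sin(nx)$-coefficient of $DF(r(k),0)[h]$ is
\begin{align*}
w_n=K_n(r(k))\bigl(a_{n-2}-2z\,a_n+a_{n+2}\bigr),\qquad n\geq 2,
\end{align*}
with the convention $a_0=a_{-1}=0$, while $w_1=y_1a_1+K_1(r(k))a_3$. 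Since $K_k(r(k))=0$ by Lemma \ref{lemaraices}, we get $w_k=0$ for every $h$, so each $g=\sum_n b_n\sin(nx)$ in the range has vanishing $\sin(kx)$-coefficient. Hence $\sin(kx)\notin\mathrm{Range}(DF(r(k),0))$ and the codimension is at least one.

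For the reverse inclusion I would show that every $g=\sum_n b_n\sin(nx)\in Y^{k-1}_c$ with $b_k=0$ is in the range, by solving $DF(r(k),0)[h]=g$ inside $X^{k}_c$. As $K_n(r(k))\neq 0$ for $n\neq k$ (Lemma \ref{lemaraices}), this reduces to the second-order recurrence $a_{n-2}-2z\,a_n+a_{n+2}=d_n:=b_n/K_n(r(k))$ for $n\neq k$, which decouples by parity; the index $n=k$ carries no equation. The characteristic roots are $\lambda_{\pm}=z\pm\sqrt{z^2-1}$, with $\lambda_+>1>\lambda_->0$ and $\lambda_+\lambda_-=1$, exactly as in Lemma \ref{lemanucleo}. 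On the parity not containing $k$ there is no gap: imposing the left boundary condition ($a_0=0$, or the modified $n=1$ relation $w_1=y_1a_1+K_1a_3$ when $k$ is odd) and decay at the stable rate $\lambda_-$ pins down a unique solution, so that block is bijective and contributes nothing to the cokernel. On the parity of $k$ the absent equation at $n=k$ frees one parameter: solving forward from the left boundary determines $a_2,\dots,a_k$ (resp. $a_1,\dots,a_k$) in terms of one free coefficient, and requiring the tail beyond $n=k$ to lie on the stable manifold $\sim\lambda_-^{\,n/2}$ selects, for each prescribed value of $a_k$, a unique decaying continuation. The matching is automatic since $a_k$ is produced by the lower block and fed as data into the upper one, so a solution exists for every admissible $g$; the one-parameter ambiguity is precisely the kernel of Lemma \ref{lemanucleo}.

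The analytic bookkeeping is where the real work lies, and is the step I expect to be the main obstacle. One must verify that the constructed $h$ actually lies in $X^{k}_c$ for $c$ small. By Lemma \ref{lemaKn} we have $K_n(r(k))\sim n$, so passing from $g\in Y^{k-1}_c$ to $d_n=b_n/K_n(r(k))\sim b_n/n$ gains exactly one derivative and places $d$ in the weight appropriate to $X^{k}_c$; meanwhile the Green's function of the recurrence decays like $\lambda_-^{\,|n-m|/2}$, so its action preserves the exponentially weighted $\ell^2$ spaces underlying $X^{k}_c$ provided $c<\tfrac12\log\lambda_+$, the same smallness that put $h_0$ in $X^{k}_c$ in Lemma \ref{lemanucleo}. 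The finitely many lower-block terms are harmless, and the tail decays at rate $\lambda_-$, so the weighted norm is finite; the anomalous coefficient $y_1$ only modifies the left boundary of the bijective odd block and creates no obstruction.

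Combining the two inclusions yields $\mathrm{Range}(DF(r(k),0))=\{\,g=\sum_n b_n\sin(nx)\in Y^{k-1}_c:\ b_k=0\,\}$, a closed hyperplane whose complement is spanned by $\sin(kx)$. Therefore $Y^{k-1}_c/\mathrm{Range}(DF(r(k),0))$ is one-dimensional, generated by the class of $\sin(kx)$, which is what we wanted to prove.
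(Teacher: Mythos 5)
Your proposal is correct and, at its core, follows the same route as the paper: the inclusion $\mathrm{Range}\subset\{b_k=0\}$ comes from $K_k(r(k))=0$ exactly as in the paper, and the reverse inclusion is obtained by inverting the second-order recurrence $a_{n-2}-2z\,a_n+a_{n+2}=b_n/K_n(r(k))$ using the characteristic roots $\lambda_{\pm}$, the one-derivative gain from $K_n(r(k))\sim n$ (Lemma \ref{lemaKn}), and exponentially weighted $\ell^2$ estimates for $c$ small. The implementations differ in one respect worth noting: the paper fixes the free parameter by setting $a_k=0$, which turns the indices below $k$ into the finite tridiagonal system \eqref{sistemapreimagen}, whose solvability requires knowing its eigenvalues $-2z+\cos\left(\frac{h\pi}{k}\right)\neq 0$, and then writes an explicit tail formula verified term by term and summed via Cauchy--Schwarz against the kernel $\lambda_-^{|m-j|}$. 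You instead solve forward from the left boundary, carrying one free coefficient up to $a_k$, and then match the prescribed $a_k$ to the unique decaying continuation; this avoids the eigenvalue computation altogether (forward substitution is always possible, and the matching only uses $\lambda_-^{k/2}\neq 0$), at the price of losing the paper's explicit preimage formula. Your bookkeeping of the weights ($c<\tfrac12\log\lambda_+$, gain of one power of $n$ from $K_n$) is, if anything, stated more carefully than in the paper.

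One caveat. You argue in the full space $Y^{k-1}_c$ and therefore must also dispose of the parity block not containing $k$, which for $k$ even includes the anomalous first equation $w_1=y_1a_1+K_1(r(k))a_3$. You assert this block is bijective, but that is not a formality: a semi-infinite tridiagonal system with a decay condition is bijective precisely when the decaying solution of the interior recurrence, $a_n\sim\lambda_-^{n/2}$, does \emph{not} satisfy the homogeneous boundary equation; if it did, that block would contribute both a kernel element and an extra dimension to the cokernel, and the full-space statement would be false. So one must check $y_1+K_1(r(k))\lambda_-\neq 0$ (true, since $y_1=-\tfrac{3r+1}{4(1+r)^2}<0$ while $0<K_1\lambda_-=\tfrac{(1-r)(1-\sqrt{r})}{4(1+r)^2(1+\sqrt{r})}<|y_1|$), and for $k$ odd the analogous condition $-2z\lambda_-+\lambda_-^2=-1\neq 0$. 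Your proposal skips this computation. The paper avoids the issue entirely because, consistently with Step 6, it proves the lemma in the parity-restricted subspace (series in $\cos(2mx)$, $\sin(2mx)$ for $k$ even), which is also the space in which Crandall--Rabinowitz is ultimately applied; restricted to that subspace, your argument is complete as written.
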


\begin{proof}

Again, we will assume for simplicity that $k$ is even. The odd case is treated similarly. We claim that:

\begin{align*}
 \text{Range($DF(r(k),0)$)} = \left\{f(x) \in Y^{k-1}_{c} \left| f(x) = \sum_{m=1,m\neq k} c_{m}\sin(2mx)\right.\right\} \equiv Z
\end{align*}

If we manage to prove the claim, then it is evident that the codimension is 1.

\underline{Range($DF(r(k),0)$) $\subset Z$:}

Let $h \in X^{k}_{c}$, given by $h(x) = \sum_{m=1}^{\infty} a_{m}\cos(2mx)$. Then:

\begin{align*}
 DF(r(k),0)[h] = \sum_{m=1}^{\infty}K_{m}(r(k))\left(a_{m-1} - 2\left(\frac{1+r(k)}{1-r(k)}\right)a_{m} + a_{m+1}\right) \sin(2mx)
= \sum_{m=1}^{\infty}b_{m} \sin(2mx)
\end{align*}

Using the inequality between the arithmetic and the quadratic mean:

\begin{align*}
 |b_{m}|^{2} \leq 3K_{m}(r(k))^{2}\left(|a_{m-1}|^{2} + 4\left(\frac{1+r(k)}{1-r(k)}\right)^{2}|a_{m}|^{2} + |a_{m+1}|^{2}\right),
\end{align*}

which implies 

\begin{align*}
\|DF(r(k),0)[h]\|_{Y^{k-1}_{c}}^{2} & = \sum_{m=1}^{\infty}|b_{m}|^{2}(1+m)^{2k-2}(\cosh(cm)^{2} + \sinh(cm)^{2}) \\
& \leq C\sum_{m=1}^{\infty}|a_{m}|^{2}\left(K_{m-1}(r(k))^{2} + K_{m}(r(k))^{2} + K_{m+1}(r(k))^{2} \right)(1+m)^{2k-2}(\cosh(cm)^{2} + \sinh(cm)^{2}) \\
& \leq C\|h\|_{X^{k}_{c}}^{2} < \infty,
\end{align*}

where in the last inequality we have used the asymptotic growth of $K_{m}(r(k))$ given by Lemma \ref{lemaKn}. This shows that $DF(r(k),0)[h] \in Y^{k-1}_{c}$. The condition on the $k$-th coefficient follows from the fact that $K_{k}(r(k)) = 0$.

\underline{Range($DF(r(k),0)$) $\supset Z$:}

Let $y(x) = \sum_{m=1,m\neq k}c_{m}\sin(2mx)$. We want to find a preimage $h(x) = \sum_{m=1}a_{m}\cos(2mx) \in X^{k}_{c}$ such that $DF(r(k),0)[h] = y$. Let $\tilde{c}_{m} = \frac{c_{m}}{K_{m}(r(k))}$, with the convention that $\tilde{c}_{k} = 0$ and let $w_{m} = \frac{\lambda_{+}^{m} - \lambda_{-}^{m}}{\lambda_{+} - \lambda_{-}}$.

We define the $a_{m}$ in the following way:

\begin{align*}
 a_{m} = \left\{
\begin{array}{cc}
 \text{The solution to the system \eqref{sistemapreimagen}} & \text{ if } m < k \\
0 & \text{ if } m = k \\
 \displaystyle - \sum_{j=1}^{m-k-1}\tilde{c}_{j+k}\frac{w_{j}}{\lambda_{+}^{m-k}} - \sum_{j=m-k}^{\infty} \tilde{c}_{j+k}\frac{w_{m-k}}{\lambda_{+}^{j}}& \text{ if } m > k, \\
\end{array}
\right\}
\end{align*}

where the system \eqref{sistemapreimagen} is given by:

\begin{align}
\label{sistemapreimagen}
\left(
\begin{array}{cccccc}
-2z & 1 & 0 & 0 & 0 & 0 \\
1 & -2z & 1 & 0 & 0 & 0 \\
0 & 1 & -2z & 1 & 0 & 0  \\
0 & 0 & \ddots & \ddots & \ddots & 0 \\
0 & 0 & 0 & 1 & -2z & 1  \\
0 & 0 & 0 & 0 & 1 & -2z   \\
\end{array}
\right)
\left(
\begin{array}{c}
a_{1} \\
a_{2} \\
a_{3} \\
a_{4} \\
\vdots \\
a_{k-1}
\end{array}
\right)
= 
\left(
\begin{array}{c}
\tilde{c}_{1} \\
\tilde{c}_{2} \\
\tilde{c}_{3} \\
\tilde{c}_{4} \\
\vdots \\
\tilde{c}_{k-1}
\end{array}
\right)
\end{align}

The system \eqref{sistemapreimagen} has a (unique) solution since its eigenvalues are given by $-2z + \cos\left(\frac{h\pi}{k}\right), h = 1,\ldots, k-1$ (see \cite{Smith:numerical-solution-pde}) and none of them is equal to zero. Moreover, we have that for $m = k+1$:

\begin{align*}
a_{m-1} - (2z) a_{m} + a_{m+1} & = (-2z)a_{k+1} + a_{k+2} \\
& = (2z)\left(\sum_{j=1}^{\infty} \tilde{c}_{j+k} \frac{1}{\lambda_{+}^{j}}\right)  - \sum_{j=2}^{\infty} \tilde{c}_{j+k} \frac{2z}{\lambda_{+}^{j}} - \tilde{c}_{k+1} \frac{1}{\lambda_{+}^{2}} \\
& = \tilde{c}_{k+1}\left(\frac{2z}{\lambda_{+}} - \frac{1}{\lambda_{+}^{2}}\right) = \tilde{c}_{k+1}
\end{align*}

and for $m > k+1$:

\begin{align*}
a_{m-1} - (2z) a_{m} + a_{m+1} & = 
- \sum_{j=1}^{m-k-2}\tilde{c}_{j+k}\frac{w_{j}}{\lambda_{+}^{m-1-k}}(1-\frac{2z}{\lambda_{+}} + \frac{1}{\lambda_{+}^{2}})
- \sum_{j=m+1-k}^{\infty} \tilde{c}_{j+k}\frac{1}{\lambda_{+}^{j}}(w_{m+1-k}-(2z)w_{m+k}+w_{m-1-k}) \\
& - \tilde{c}_{m-1}\left(\frac{w_{m-1-k}}{\lambda_{+}^{m-1-k}} - 2z\frac{w_{m-1-k}}{\lambda_{+}^{m-k}} + \frac{w_{m-1-k}}{\lambda_{+}^{m+1-k}}\right) - \tilde{c}_{m}\left(\frac{w_{m-1-k}}{\lambda_{+}^{m-k}} - (2z)\frac{w_{m-k}}{\lambda_{+}^{m-k}} + \frac{w_{m-k}}{\lambda_{+}^{m+1-k}}\right) \\
& = \tilde{c}_{m}
\end{align*}

%[Quiza dar mas detalle aqui?]

We are finally left to check that our solution belongs to the space $X^{k}_{c}$. To do so, it is enough to check the $l^{2}$ summability of the coefficients $a_{m+k}$ for $m > 0$. 

We have that:

\begin{align*}
|a_{m+k}| \lesssim \sum_{j=1}^{\infty}|\tilde{c}_{j+k}|\lambda_{-}^{|m-j|}
\lesssim \left(\sum_{j=1}^{\infty}|\tilde{c}_{j+k}|^{2}\lambda_{-}^{|m-j|}\right)^{1/2}\underbrace{\left(\sum_{j=1}^{\infty}\lambda_{-}^{|m-j|}\right)^{1/2}}_{\leq C},
\end{align*}

where we used the Cauchy-Schwarz inequality in the second inequality. Summing over $m$:

\begin{align*}
\sum_{m=1}^{\infty}|a_{m+k}|^{2} \lesssim \sum_{m=1}^{\infty} \left(\sum_{j=1}^{\infty}|\tilde{c}_{j+k}|^{2}\lambda_{-}^{|m-j|}\right)
= \sum_{j=1}^{\infty}|\tilde{c}_{j+k}|^{2} \sum_{m=1}^{\infty}\lambda_{-}^{|m-j|} \lesssim \sum_{j=1}^{\infty}|\tilde{c}_{j+k}|^{2} < \infty,
\end{align*}

proving thus the $l^{2}$-summability. This concludes the proof.

\end{proof}

\subsection{Step 5}

\begin{lemma}
Let $k > 2$. Then:

$\pa_{r} DF(r(k),0)(h_0) \not \in$ Range($DF(r(k),0)$), where Ker$(DF(r(k),0)) = \langle h_0 \rangle$.
\end{lemma}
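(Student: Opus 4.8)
The plan is to reduce the transversality condition to a single nonvanishing scalar, using the explicit description of the range obtained in the preceding lemma. There (for $k$ even; the odd case is identical) it is shown that $\mathrm{Range}(DF(r(k),0)) = Z$, the set of $f \in Y^{k-1}_c$ of the form $\sum_{m\neq k} c_m \sin(2mx)$. Since $DF$ sends even cosine series to even sine series, $\partial_r DF(r(k),0)(h_0)$ is automatically an even-frequency sine series, so it lies in $Z$ if and only if its $\sin(2kx)$-coefficient vanishes. Hence $\partial_r DF(r(k),0)(h_0)\notin\mathrm{Range}(DF(r(k),0))$ is \emph{equivalent} to the statement that the coefficient of $\sin(2kx)$ in $\partial_r DF(r(k),0)(h_0)$ is nonzero, and this is what I would compute.

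First I would record, from the linearization lemma together with the Definition of $K_m$, that for every $r$ near $r(k)$ and $h = \sum_m a_m \cos(2mx)$,
$$DF(r,0)[h] = \sum_{m=1}^\infty K_m(r)\,\big(a_{m-1} - 2z(r)\,a_m + a_{m+1}\big)\,\sin(2mx), \qquad z(r) = \tfrac{1+r}{1-r}.$$
Applying this to the fixed generator $h_0 = \sum_p c_p\cos(2px)$ of $\mathrm{Ker}(DF(r(k),0))$ gives $DF(r,0)[h_0] = \sum_m K_m(r)\,T_m(r)\,\sin(2mx)$ with $T_m(r) := c_{m-1} - 2z(r)c_m + c_{m+1}$. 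The key point is that when differentiating in $r$ the function $h_0$, and hence the constants $c_p$, is held fixed (we differentiate the \emph{operator}, not the kernel), so
$$\partial_r DF(r,0)[h_0] = \sum_{m=1}^\infty \big(K_m'(r)\,T_m(r) + K_m(r)\,T_m'(r)\big)\sin(2mx).$$
Evaluating the $m=k$ term at $r=r(k)$ and using $K_k(r(k))=0$, the second summand drops out, so the coefficient of $\sin(2kx)$ is exactly $K_k'(r(k))\,T_k(r(k))$.

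It then remains to verify that both factors are nonzero. For $T_k(r(k)) = c_{k-1} - 2z\,c_k + c_{k+1}$ I would invoke Remark \ref{remarkfilanocero} directly, which identifies this quantity as $\lambda_+^k(\lambda_- - \lambda_+) \neq 0$ (recall $\lambda_+ > 1 > \lambda_-$): this is precisely the one row where the kernel coefficients fail the three-term recurrence. For $K_k'(r(k))$, I would write $K_k(r) = -\frac{1-r}{8(r+1)^2}B_k(r)$, where $B_k$ is the bracket of Lemma \ref{lemaraices}; since the prefactor is smooth and nonvanishing on $(0,1)$ and $r(k)$ is a \emph{simple} root of $B_k$ (Lemma \ref{lemaraices}), one gets $K_k'(r(k)) = -\frac{1-r(k)}{8(r(k)+1)^2}B_k'(r(k)) \neq 0$. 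Multiplying the two nonzero factors yields a nonzero $\sin(2kx)$-coefficient, hence $\partial_r DF(r(k),0)(h_0)\notin Z$.

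The argument is short, and I do not expect a genuine obstacle. The two points needing care are (i) keeping $h_0$ frozen while differentiating in $r$—this is what kills the term $K_k(r(k))T_k'(r(k))$ and isolates $K_k'(r(k))T_k(r(k))$—and (ii) confirming that $\partial_r DF(r(k),0)(h_0)$ really lies in $Y^{k-1}_c$, so that comparison with the range is meaningful; the latter follows from the exponential decay of the $c_p$ against the at most linear growth of $K_m$ and $K_m'$ supplied by Lemma \ref{lemaKn}. The only structurally essential input is the simplicity of the root $r(k)$, already secured by the Descartes'-rule argument of Lemma \ref{lemaraices}.
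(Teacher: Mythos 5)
Your proposal is correct and follows essentially the same route as the paper: both reduce the claim to the nonvanishing of the $\sin(2kx)$-coefficient of $\pa_{r} DF(r(k),0)(h_0)$, compute that coefficient as $K_{k}'(r(k))\,\bigl(c_{k-1}-2z\,c_{k}+c_{k+1}\bigr)$ after the term with $K_{k}(r(k))=0$ drops out, and conclude from the simplicity of the root $r(k)$ (Lemma \ref{lemaraices}) together with Remark \ref{remarkfilanocero}. The extra points you spell out (the reduction via the explicit range $Z$ and the membership of the derivative in $Y^{k-1}_{c}$) are left implicit in the paper but require no new ideas.
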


\begin{proof}
To do so, we will show that the $k$-th sine component of $\pa_{r} DF(r(k),0)(h_0)$ is non-zero. Again, let us assume that $k$ is even. Let

\begin{align*}
 h_{0}(x) = \sum_{p=1}^{\infty}c_{p}\cos(2px),
\end{align*}

with the coefficients $c_{p}$ given in Lemma \ref{lemanucleo}. Then, the $k$-th sine component is given by

\begin{align*}
& \left. \pa_{r} \left(K_{k}(r)\left(c_{k-1} - 2\left(\frac{1+r}{1-r}\right) c_{k} + c_{k+1}\right)\right)\right|_{r=r(k)} \\
& = \underbrace{K_{k}'(r(k))}_{\neq 0}\underbrace{(c_{k-1} - 2\left(\frac{1+r(k)}{1-r(k)}\right) c_{k} + c_{k+1}))}_{\neq 0}
+ \underbrace{K_{k}(r(k))}_{=0}\left.\pa_{r}\left(- 2\left(\frac{1+r}{1-r}\right) c_{k}\right)\right|_{r=r(k)} \neq 0,
\end{align*}

where in the first inequality we used the fact that $r(k)$ is a simple root of $K_{k}(r)$ and in the second we used Remark \ref{remarkfilanocero}.

\end{proof}

\begin{rem}
Since the curvature of the patch can be bounded by $F(r) - C \varepsilon^{2}$, where $\varepsilon$ is the radius of the neighbourhood of the perturbations $R(x)$ and $F(r) > 0$ we can conclude that for a small enough subset of our solutions, the boundary of the patches is convex.
\end{rem}

\subsection{Step 6}

To show that $F$ maps $X^{k,odd}_{c}$ into $Y^{k-1,odd}_{c}$ (and $X^{k,even}_{c}$ into $Y^{k-1,even}_{c}$) respectively, it is enough to observe that if $R(x)$ is even and $R(x) = R(\pi-x)$ (resp. $R(x) = -R(\pi-x)$) then $F(r,R)(x)$ is odd and $F(r,R)(x) = -F(r,R)(\pi-x)$ (resp. $F(r,R)(x) = F(r,R)(\pi-x)$).

\end{proofthm}

\section{Existence for the perturbation from disks}

\label{sectiondiscos}

This section is devoted to show Theorem \ref{teoremaexistenciadiscos}.

\begin{proofthm}{teoremaexistenciadiscos}
The proof of this theorem follows the same steps that the proof of Theorem \ref{teoremaexistenciaelipses}. It will be divided into 6 steps. These steps correspond to check the hypotheses of the Crandall-Rabinowitz theorem \cite{Crandall-Rabinowitz:bifurcation-simple-eigenvalues} for $$F(\Omega,R)=\Omega R'-\sum_{i=1}^3F_i(R),$$
where, if $\al > 0$:

\begin{align*}
F_1(R)=&\frac{C(\alpha)}{R(x)}\int\frac{\sin(x-y)}{\left(\left(R(x)-R(y)\right)^2+4R(x)R(y)\sin^2\left(\frac{x-y}{2}\right)\right)^\frac{\alpha}{2}}
\left(R(x)R(y)+R'(x)R'(y)\right)dy,\\
F_2(R)=&C(\alpha)\int\frac{\cos(x-y)}{\left(\left(R(x)-R(y)\right)^2+4R(x)R(y)\sin^2\left(\frac{x-y}{2}\right)\right)^\frac{\alpha}{2}}
\left(R'(y)-R'(x)\right)dy,\\
F_3(R)=&C(\alpha)\frac{R'(x)}{ R(x) }\int\frac{\cos(x-y)}{\left(\left(R(x)-R(y)\right)^2+4R(x)R(y)\sin^2\left(\frac{x-y}{2}\right)\right)^\frac{\alpha}{2}}
\left(R(x)-R(y)\right)dy,
\end{align*}

and if $\al = 0$:

\begin{align*}
F_1(R)=&\frac{1}{4\pi R(x)}\int \sin(x-y)\log\left(\left(R(x)-R(y)\right)^2+4R(x)R(y)\sin^2\left(\frac{x-y}{2}\right)\right)
\left(R(x)R(y)+R'(x)R'(y)\right)dy,\\
F_2(R)=&\frac{1}{4\pi}\int \cos(x-y)\log\left(\left(R(x)-R(y)\right)^2+4R(x)R(y)\sin^2\left(\frac{x-y}{2}\right)\right)
\left(R'(y)-R'(x)\right)dy,\\
F_3(R)=&\frac{R'(x)}{ 4\pi R(x) }\int\cos(x-y)\log\left(\left(R(x)-R(y)\right)^2+4R(x)R(y)\sin^2\left(\frac{x-y}{2}\right)\right)
\left(R(x)-R(y)\right)dy,
\end{align*}

The hypotheses are the following:
\begin{enumerate}
\item The functional $F$ satisfies $$F(\Omega,R)\,:\, \R\times \{1+V^r\}\mapsto Y^{k}_{c},$$ where $V^r$ is the open neighborhood of 0
\begin{align*}
V^r & = 
\left\{
\begin{array}{cc}
\{ f\in X^{k+1}\,:\, ||f||_{X^{k+1}}<r\} & \text{ if } 0 < \al < 1 \\
\{ f\in X^{k+1+\log}\,:\, ||f||_{X^{k+1+\log}}<r\} & \text{ if } \al = 1 \\
\{ f\in X^{k+\al}\,:\, ||f||_{X^{k+\al}}<r\} & \text{ if } 1 < \al < 2 \\
\end{array}
\right.
\end{align*}
for  $0<r<1$ and $k\geq 3$.
\item $F(\Omega,1) = 0$ for every $\Omega$.
\item The partial derivatives $F_{\Omega}$, $F_{R}$ and $F_{R\Omega}$ exist and are continuous.
\item Ker($\mathcal{F}$) and $Y^{k}_{c}$/Range($\mathcal{F}$) are one-dimensional, where $\mathcal{F}$ is the linearized operator around the disk $R = 1$ at $\Omega = \Omega_m$.
\item $F_{\Omega R}(\Omega_m,1)(h_0) \not \in$ Range($\mathcal{F}$), where Ker$(\mathcal{F}) = \langle h_0 \rangle$.
\item Step 1 can be applied to the $m$-fold symmetric spaces.
\end{enumerate}

Since the majority of the proof was already done in \cite{Castro-Cordoba-GomezSerrano:existence-regularity-gsqg}, we will only outline this case. Steps 2, 4, 5 and 6 follow verbatim, and steps 1 and 3 can easily be adapted to the new spaces $X_{c}$. %We leave the completion of the full details to the reader.

\end{proofthm}

\appendix

\section{Basic integrals}

\begin{lemma}
\begin{align*}
 \int_{0}^{2\pi}\frac{e^{iky}}{(1+r^2)+(r^2-1)\cos(y)}dy = \frac{\pi}{r}\left(\frac{1-r}{1+r}\right)^{|k|}
\end{align*}
\end{lemma}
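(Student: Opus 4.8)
The plan is to evaluate the integral by the residue theorem after the standard reduction to the unit circle, using throughout that $0<r<1$ (the same restriction on the bifurcation parameter as in the preceding sections). First I would substitute $z=e^{iy}$, so that $dy=\frac{dz}{iz}$, $\cos y=\frac12\left(z+z^{-1}\right)$ and $e^{iky}=z^{k}$, turning the path of integration into the positively oriented unit circle $|z|=1$. Multiplying numerator and denominator by $2z$, the denominator becomes the quadratic $(r^{2}-1)z^{2}+2(1+r^{2})z+(r^{2}-1)$.

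Next I would factor this quadratic. Its discriminant is $(1+r^{2})^{2}-(r^{2}-1)^{2}=4r^{2}$, so its two roots are
\begin{align*}
z_{+}=\frac{1-r}{1+r},\qquad z_{-}=\frac{1+r}{1-r},
\end{align*}
and one checks $z_{+}z_{-}=1$ and $z_{+}-z_{-}=\frac{-4r}{1-r^{2}}$. Because $0<r<1$, the root $z_{+}$ lies strictly inside the unit disk while $z_{-}$ lies strictly outside. Hence, for $k\ge 0$ (so that $z^{k}$ is holomorphic inside $|z|=1$), the only singularity enclosed is the simple pole at $z_{+}$.

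Then, assuming first $k\ge 0$, I would rewrite the integral as
\begin{align*}
\int_{0}^{2\pi}\frac{e^{iky}}{(1+r^2)+(r^2-1)\cos y}\,dy
=\frac{2}{i(r^{2}-1)}\oint_{|z|=1}\frac{z^{k}}{(z-z_{+})(z-z_{-})}\,dz,
\end{align*}
and apply the residue theorem. The residue at $z_{+}$ is $\frac{z_{+}^{k}}{z_{+}-z_{-}}$, and substituting $z_{+}-z_{-}=\frac{-4r}{1-r^{2}}$ the prefactors cancel cleanly, leaving $\frac{\pi}{r}z_{+}^{k}=\frac{\pi}{r}\left(\frac{1-r}{1+r}\right)^{k}$, which is the asserted value for nonnegative $k$.

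Finally, for $k<0$ I would invoke the symmetry $y\mapsto -y$: the denominator depends on $y$ only through $\cos y$, hence is even and $2\pi$-periodic, so replacing $y$ by $-y$ leaves the integral over a full period unchanged while sending $e^{iky}$ to $e^{-iky}$; therefore $I(k)=I(-k)$. This promotes the exponent $k$ to $|k|$ and completes the proof. The argument is entirely routine; the only place requiring genuine care is the sign bookkeeping in the factorization and the verification that $z_{+}$, rather than $z_{-}$, is the root inside the unit circle, since an error there would invert the base $\frac{1-r}{1+r}$ of the power.
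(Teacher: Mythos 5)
Your proof is correct, and for $k\ge 0$ it is essentially the paper's argument: the same reduction to the unit circle via $z=e^{iy}$, the same quadratic with roots $\frac{1-r}{1+r}$ (inside) and $\frac{1+r}{1-r}$ (outside), and the same residue computation yielding $\frac{\pi}{r}\left(\frac{1-r}{1+r}\right)^{k}$. Where you genuinely diverge is the case $k<0$: the paper attacks it head-on, keeping $w^{-\tilde k}$ in the integrand, which creates a pole of order $\tilde k$ at the origin, and then sums the residue at $w_-=\frac{1-r}{1+r}$ with the origin contribution computed via partial fractions and the derivative formula for higher-order poles. Your observation that the denominator depends on $y$ only through $\cos y$, so that $y\mapsto -y$ gives $I(k)=I(-k)$ outright, eliminates that entire computation and reduces the negative case to the positive one in one line. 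This is cleaner and less error-prone than the paper's route (which needs careful bookkeeping of the two contributions and their cancellation pattern), at no cost in generality; the paper's direct method has the mild virtue of being self-contained residue calculus throughout, but the symmetry argument is the better proof of the two for this statement.
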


\begin{proof}
We assume first $k \geq 0$. Then, performing the change of variables $e^{iy} = w$, we have that:

\begin{align*}
\int_{0}^{2\pi}\frac{e^{iky}}{(1+r^2)+(r^2-1)\cos(y)}dy = \frac{1}{r^2-1}\int_{0}^{2\pi} \frac{e^{iky}}{\cos(y)-\frac{1+r^2}{1-r^{2}}}dy = \frac{1}{r^2-1}\frac{2}{i}\int_{|w|=1} \frac{w^{k}}{w^{2}-2\frac{1+r^2}{1-r^{2}}w+1} dw.
\end{align*}

We look for the poles of the fraction (i.e. the zeros of the denominator). Solving the quadratic equation yields:

\begin{align*}
 w^{2} - 2\frac{1+r^2}{1-r^{2}}w + 1 = 0 \Rightarrow w = \left\{
\begin{array}{rl}
 w_{+} & = \frac{1+r^2}{1-r^{2}} + \sqrt{\left(\frac{1+r^2}{1-r^{2}}\right)^{2}-1} = \frac{1+r}{1-r} \\
 w_{-} & = \frac{1+r^2}{1-r^{2}} - \sqrt{\left(\frac{1+r^2}{1-r^{2}}\right)^{2}-1} = \frac{1-r}{1+r} \\
\end{array}
\right.
\end{align*}
% 
% In the special case where $a = \cosh(2\xi_{0})$, then
% 
% \begin{align*}
%  w_{+} = e^{2\xi_{0}}, \quad w_{-} = e^{-2\xi_{0}}
% \end{align*}

Since only $w_{-}$ is in the interior of the region, we can apply the residue theorem to get

\begin{align*}
 \frac{1}{r^2-1}\frac{2}{i}\int_{|w|=1} \frac{w^{k}}{w^{2}-2aw+1} dw = \frac{1}{r^2-1} 2\pi i \frac{2}{i} \frac{w_{-}^{k}}{w_{-} - w_{+}}
= \frac{1}{r^2-1} 4\pi \left(\frac{1-r}{1+r}\right)^{k} \frac{r^2-1}{4r} = \frac{\pi}{r}\left(\frac{1-r}{1+r}\right)^{k}
\end{align*}

Assume now $k = -\tilde{k}, \tilde{k} \geq 0$. The integral becomes:

\begin{align*}
 \frac{1}{r^2-1}\frac{2}{i}\int_{|w|=1} \frac{1}{w^{\tilde{k}}}\frac{1}{w^{2}-2\frac{1+r^2}{1-r^{2}}w+1} dw
= \frac{1}{r^2-1}\frac{2}{i} \frac{1}{2\sqrt{\left(\frac{1+r^2}{1-r^{2}}\right)^{2}-1}}\int_{|w|=1} \frac{1}{w^{\tilde{k}}}\left(\frac{1}{w-w_{+}} - \frac{1}{w-w_{-}}\right) dw.
\end{align*}

There are now two poles of the function, one at $w = 0$ of multiplicity $\tilde{k}$, and a simple one at $w = w_{-}$. The contribution from $w_{-}$ is given by

\begin{align*}
 -\frac{1}{r^2-1}\frac{4\pi i}{2i} \frac{1}{\sqrt{\left(\frac{1+r^2}{1-r^{2}}\right)^{2}-1}} \frac{1}{w_{-}^{\tilde{k}}}.
\end{align*}

The contribution from 0 is

\begin{align*}
 \frac{1}{r^2-1}2\pi i\frac{2}{i}\frac{1}{2\sqrt{\left(\frac{1+r^2}{1-r^{2}}\right)^{2}-1}}\frac{1}{(k-1)!} \frac{d^{k-1}}{d^{k-1}w}\left.\left(\frac{1}{w-w_{+}} - \frac{1}{w-w_{-}}\right)\right|_{w=0}
= -\frac{1}{r^2-1}2\pi\frac{1}{\sqrt{\left(\frac{1+r^2}{1-r^{2}}\right)^{2}-1}}\left(\frac{1}{w_{+}^{\tilde{k}}} - \frac{1}{w_{-}^{\tilde{k}}}\right).
\end{align*}

Thus, the integral is equal to

\begin{align*}
 -\frac{1}{r^2-1}\frac{2\pi}{\sqrt{\left(\frac{1+r^2}{1-r^{2}}\right)^{2}-1}} \frac{1}{w_{+}^{\tilde{k}}} = \frac{\pi}{r}\left(\frac{1-r}{1+r}\right)^{\tilde{k}},
\end{align*}

as claimed.

\end{proof}

\begin{corollary}
\begin{align*}
 \int_{0}^{2\pi}\frac{\cos(ky)}{(1+r^2)+(r^2-1)\cos(y)}dy = \frac{\pi}{r}\left(\frac{1-r}{1+r}\right)^{|k|}, \quad 
 \int_{0}^{2\pi}\frac{\sin(ky)}{(1+r^2)+(r^2-1)\cos(y)}dy = 0
\end{align*}
\end{corollary}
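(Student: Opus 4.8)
The plan is to read off both identities simultaneously from the preceding Lemma by separating real and imaginary parts. Writing $e^{iky} = \cos(ky) + i\sin(ky)$ and using linearity of the integral, the Lemma asserts that
\begin{align*}
\int_{0}^{2\pi}\frac{\cos(ky)}{(1+r^2)+(r^2-1)\cos(y)}dy + i\int_{0}^{2\pi}\frac{\sin(ky)}{(1+r^2)+(r^2-1)\cos(y)}dy = \frac{\pi}{r}\left(\frac{1-r}{1+r}\right)^{|k|}.
\end{align*}
For $0<r<1$ the denominator $(1+r^2)+(r^2-1)\cos(y)$ is strictly positive (it equals $2r^2$ at $y=0$ and $2$ at $y=\pi$, and is monotone in between), so both integrals on the left-hand side are genuine real numbers, while the right-hand side is manifestly real. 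Equating real parts yields the stated cosine identity, and equating imaginary parts forces the sine integral to vanish.

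As an independent confirmation of the second identity, I would invoke a parity argument: the kernel $1/((1+r^2)+(r^2-1)\cos(y))$ is even and $2\pi$-periodic in $y$, whereas $\sin(ky)$ is odd, so their product is an odd $2\pi$-periodic function, whose integral over a full period is zero. This both recovers the vanishing of the sine integral and serves as a consistency check on the computation in the Lemma.

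Since the corollary is an immediate consequence of the Lemma, I do not anticipate any genuine obstacle. The only point requiring (entirely routine) care is to observe that it is precisely the real-valuedness of $\frac{\pi}{r}\left(\frac{1-r}{1+r}\right)^{|k|}$ that delivers the vanishing of the sine integral \emph{for free}, so that no separate computation for the sine case is needed.
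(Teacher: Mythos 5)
Your proposal is correct and coincides with the paper's (implicit) argument: the corollary is stated there without proof precisely because it follows by taking real and imaginary parts of the preceding Lemma, exactly as you do, using that the kernel is real (indeed bounded below by $2r^2>0$) so both integrals are real. The parity argument you add for the sine integral is a valid independent check but not needed.
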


\begin{lemma}
 Let $k \in \Z$. We have that, if $k \neq 0$:

\begin{align*}
 \int_{0}^{2\pi} e^{iky} \log\left(\left|\sin\left(\frac{y}{2}\right)\right|^{2}\right)dy = -2\pi\frac{1}{|k|}
\end{align*}

and if $k = 0$:

\begin{align*}
 \int_{0}^{2\pi} e^{iky} \log\left(\left|\sin\left(\frac{y}{2}\right)\right|^{2}\right)dy = -4\pi\log(2)
\end{align*}

\end{lemma}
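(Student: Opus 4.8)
The plan is to reduce the computation to the classical Fourier expansion of $\log\left|\sin\left(\frac{y}{2}\right)\right|$ and then read off the desired coefficients by orthogonality. First I would rewrite the integrand in terms of $1-e^{iy}$: since $|1-e^{iy}|^{2} = 2(1-\cos y) = 4\sin^{2}\left(\frac{y}{2}\right)$, one has
\begin{align*}
\log\left|\sin\left(\frac{y}{2}\right)\right|^{2} = 2\log|1-e^{iy}| - 2\log 2 = 2\,\Re\log(1-e^{iy}) - 2\log 2.
\end{align*}
Using the power series $\log(1-z) = -\sum_{n\geq 1} z^{n}/n$ with $z = e^{iy}$ and taking real parts gives the expansion
\begin{align*}
\log\left|\sin\left(\frac{y}{2}\right)\right|^{2} = -2\log 2 - 2\sum_{n=1}^{\infty}\frac{\cos(ny)}{n},
\end{align*}
valid in $L^{2}(\mathbb{T})$.

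Next I would integrate against $e^{iky}$ term by term. For $k=0$ the cosine terms integrate to zero over a full period and only the constant survives, giving $-2\log 2\cdot 2\pi = -4\pi\log 2$. For $k\neq 0$ the constant term drops out, and writing $\cos(ny) = \tfrac{1}{2}(e^{iny}+e^{-iny})$, orthogonality of the exponentials on $[0,2\pi]$ shows that only the single term $n=|k|$ contributes, producing $-2\cdot\frac{1}{|k|}\cdot\pi = -\frac{2\pi}{|k|}$, exactly as claimed.

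The one point requiring care — and the main obstacle — is the justification of the term-by-term integration, because $\log\left|\sin\left(\frac{y}{2}\right)\right|$ has an integrable logarithmic singularity at $y=0$ and $y=2\pi$ and the series does not converge uniformly there. I would handle this by observing that $\log\left|\sin\left(\frac{y}{2}\right)\right|^{2}\in L^{2}(\mathbb{T})$ (its square is still integrable near the singularity), so the expansion above holds in $L^{2}$ and pairing it against the bounded function $e^{iky}$ is continuous; alternatively one can excise a neighbourhood of the endpoints, integrate, and pass to the limit via dominated convergence against the integrable majorant $\left|\log\left|\sin\left(\frac{y}{2}\right)\right|\right|$.

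As an independent check for the $k=0$ case, avoiding the series entirely, I would use the substitution $u=\frac{y}{2}$ together with the classical value $\int_{0}^{\pi}\log\sin u\,du = -\pi\log 2$, which gives $\int_{0}^{2\pi}\log\left|\sin\left(\frac{y}{2}\right)\right|^{2}dy = 4\int_{0}^{\pi}\log\sin u\,du = -4\pi\log 2$ directly; one could likewise confirm the $k\neq 0$ value by integration by parts, using $\frac{d}{dy}\log\left|\sin\left(\frac{y}{2}\right)\right|^{2} = \cot\left(\frac{y}{2}\right)$ (the boundary terms cancel by the symmetry of $\sin\left(\frac{y}{2}\right)$ about $y=\pi$) and the principal-value Hilbert-kernel identity for $\cot\left(\frac{y}{2}\right)$.
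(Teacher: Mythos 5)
Your proof is correct, but it takes a genuinely different route from the paper's. The paper treats $k \neq 0$ by integration by parts, choosing the antiderivative $\frac{e^{iky}-1}{ik}$ of $e^{iky}$ so that the boundary terms vanish and the factor $e^{iky}-1$ cancels the endpoint singularity of $\frac{d}{dy}\log\left(\sin^{2}\left(\frac{y}{2}\right)\right) = \cot\left(\frac{y}{2}\right) = i\,\frac{e^{iy}+1}{e^{iy}-1}$; this reduces the integral to that of the trigonometric polynomial $\frac{e^{iky}-1}{e^{iy}-1}\left(e^{iy}+1\right) = \left(\sum_{j=0}^{k-1}e^{ijy}\right)\left(e^{iy}+1\right)$, which orthogonality evaluates to $2\pi$, giving $-\frac{2\pi}{|k|}$ with no infinite series and hence no convergence issues at all; the $k=0$ case is then simply cited from Gradshteyn--Ryzhik. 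You instead derive the classical expansion $\log\left|\sin\left(\frac{y}{2}\right)\right|^{2} = -2\log 2 - 2\sum_{n\geq 1}\frac{\cos(ny)}{n}$ from the power series of $\log(1-z)$ and pair it against $e^{iky}$. This buys uniformity: all $k$, including $k=0$, come out of one computation and nothing is outsourced to an integral table. The price is the justification of the expansion in $L^{2}$, and here your argument is adequate but slightly glossed: membership of $\log\left|\sin\left(\frac{y}{2}\right)\right|^{2}$ in $L^{2}$ does not by itself say that this particular series is its Fourier series. The clean way to close this, consistent with what you sketch, is to note that square-summability of the coefficients makes the series converge in $L^{2}$ to some limit, while Abel's theorem (or Dirichlet's test) gives pointwise convergence on $(0,2\pi)$ to $2\log\left|1-e^{iy}\right| - 2\log 2$; a.e.\ convergence of a subsequence of the partial sums then identifies the $L^{2}$ limit with this function, after which pairing with the bounded function $e^{iky}$ is continuous. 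Incidentally, the integration by parts against $\cot\left(\frac{y}{2}\right)$ that you mention only as an independent check for $k\neq 0$ is precisely the paper's main mechanism.
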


\begin{proof}
 Assume $k \neq 0$. We have that

\begin{align*}
 \int_{0}^{2\pi} e^{iky} \log\left(\left|\sin\left(\frac{y}{2}\right)\right|^{2}\right)dy = -\frac{1}{i k} \int_{0}^{2\pi} \frac{e^{iky} - 1}{1-\cos(y)}\sin(y) dy
= -\frac{1}{k} \int_{0}^{2\pi} \frac{e^{iky} - 1}{e^{iy} - 1}(e^{iy} + 1) dy
\end{align*}

If $k > 0$:

\begin{align*}
 -\frac{1}{k} \int_{0}^{2\pi} \frac{e^{iky} - 1}{e^{iy} - 1}(e^{iy} + 1) dy = 
-\frac{1}{k} \int_{0}^{2\pi} \sum_{j=0}^{k-1} e^{ijy} (e^{iy} + 1) dy = -\frac{2\pi}{k}
\end{align*}

If $k < 0$:

\begin{align*}
 -\frac{1}{k} \int_{0}^{2\pi} \frac{e^{iky} - 1}{e^{iy} - 1}(e^{iy} + 1) dy = 
\frac{1}{k} \int_{0}^{2\pi} \frac{e^{iky} - 1}{e^{-iy} - 1}(e^{-iy} + 1) dy = 
\frac{1}{k} \int_{0}^{2\pi} \sum_{j=0}^{k-1} e^{-ijy} (e^{-iy} + 1) dy = \frac{2\pi}{k} = -\frac{2\pi}{|k|}
\end{align*}

The case $k = 0$ can be found in \cite[Formula 4.225.3]{Gradshteyn-Ryzhik:table-integrals}.

\end{proof}

\begin{lemma}
Let $k$ be an integer. We have that, if $k \neq 0$:

\begin{align*}
 \int_{0}^{2\pi} e^{iky} \log\left(\frac{1+r^2}{1-r^2}-\cos(y)\right)dy = -\frac{2\pi}{|k|}\left(\frac{1-r}{1+r}\right)^{|k|}
\end{align*}

and if $k = 0$:

\begin{align*}
 \int_{0}^{2\pi} e^{iky} \log\left(\frac{1+r^2}{1-r^2}-\cos(y)\right)dy = -2\pi \log\left(2\frac{1-r}{1+r}\right)
\end{align*}

\end{lemma}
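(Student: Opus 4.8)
The plan is to make the integrand completely explicit by factoring the argument of the logarithm into a standard Poisson-kernel type expression. Set $a = \frac{1+r^2}{1-r^2}$, so that $a>1$ for $0<r<1$. The first step is to find a real number $\rho$ and a constant $C$ for which $a-\cos(y) = C\,(1-2\rho\cos(y)+\rho^2)$. Matching the constant and the $\cos(y)$-coefficients forces $C=\frac{1}{2\rho}$ and $\rho^2-2a\rho+1=0$. Since $a^2-1 = \frac{4r^2}{(1-r^2)^2}$, the two roots are $\frac{1+r}{1-r}$ and $\frac{1-r}{1+r}$; choosing the one inside the unit disk, $\rho=\frac{1-r}{1+r}<1$, gives the key identity
\[
\frac{1+r^2}{1-r^2}-\cos(y) \;=\; \frac{1}{2\rho}\,\bigl|1-\rho e^{iy}\bigr|^2, \qquad \rho=\frac{1-r}{1+r}.
\]

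The second step is to take logarithms and expand. Splitting off the constant,
\[
\log\!\left(\frac{1+r^2}{1-r^2}-\cos(y)\right) = -\log(2\rho) + \log\bigl|1-\rho e^{iy}\bigr|^2,
\]
and since $\rho<1$ the Taylor series of the principal logarithm gives $\log(1-\rho e^{iy}) = -\sum_{n\geq 1}\frac{\rho^n}{n}e^{iny}$; adding the complex conjugate yields
\[
\log\bigl|1-\rho e^{iy}\bigr|^2 = -2\sum_{n=1}^{\infty}\frac{\rho^n}{n}\cos(ny),
\]
a series that converges uniformly in $y$ because $\rho^n/n$ is summable.

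The third step is to integrate term by term against $e^{iky}$, which is justified by the uniform convergence. The constant term $-\log(2\rho)$ contributes $-2\pi\log(2\rho)$ when $k=0$ and vanishes when $k\neq 0$. For the series, the identity $\int_0^{2\pi}e^{iky}\cos(ny)\,dy = \pi$ if $n=|k|$ and $0$ otherwise selects exactly the term $n=|k|$ when $k\neq 0$, producing $-\frac{2\pi}{|k|}\rho^{|k|}$, while all terms integrate to zero when $k=0$. Substituting $\rho=\frac{1-r}{1+r}$ recovers both claimed formulas.

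The entire computation is mechanical once the factorization is in place, so the only genuine point is the algebraic identification of $\rho=\frac{1-r}{1+r}$ as the in-disk root of $\rho^2-2a\rho+1=0$; this is the crux, and the rest is routine. As a consistency check one could instead differentiate $L(r):=\int_0^{2\pi}e^{iky}\log(a-\cos y)\,dy$ in $r$: since $(1+r^2)+(r^2-1)\cos(y)=(1-r^2)\bigl(a-\cos(y)\bigr)$, the derivative $\frac{dL}{dr}=a'(r)\int_0^{2\pi}\frac{e^{iky}}{a-\cos y}\,dy$ is evaluated directly by the first lemma of the appendix, and integrating back in $r$ with the $k=0$ value fixing the constant of integration reproduces the same result.
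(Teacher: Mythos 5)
Your proof is correct, and it takes a genuinely different route from the paper's. The paper integrates by parts in $y$ (valid only for $k\neq 0$), writes $e^{iky}\sin(y)$ as a combination of $e^{i(k\pm1)y}$, and then invokes the appendix's first lemma — itself proved by residue calculus — to evaluate the resulting Poisson-kernel integrals; the $k=0$ case is not covered by that argument and is instead cited from Gradshteyn--Ryzhik. You instead factor the argument of the logarithm as
\begin{equation*}
\frac{1+r^2}{1-r^2}-\cos(y)=\frac{1}{2\rho}\bigl|1-\rho e^{iy}\bigr|^{2},\qquad \rho=\frac{1-r}{1+r},
\end{equation*}
and expand $\log\bigl|1-\rho e^{iy}\bigr|^{2}=-2\sum_{n\geq1}\frac{\rho^{n}}{n}\cos(ny)$, which is a uniformly convergent series, so term-by-term integration reads off all Fourier coefficients at once. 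What your approach buys: it is self-contained (no residue computation, no table lookup, no dependence on the preceding lemma), it treats $k=0$ and $k\neq 0$ in a single computation, and it makes transparent that the lemma is nothing but the Fourier expansion of the logarithmic kernel. What the paper's approach buys: given that the Poisson-kernel integral had already been established for the other lemmas of the appendix, the integration by parts makes the new computation a two-line corollary of existing machinery, at the modest cost of outsourcing the $k=0$ constant. Your closing remark — differentiating in $r$ and integrating back — is a third legitimate route, closer in spirit to the paper's since it also reduces to the first appendix lemma, but as you present it, it is only a consistency check and the series argument is the actual proof; that proof stands on its own.
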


\begin{proof}
Assume $k \neq 0$. If we integrate by parts, we obtain

\begin{align*}
 \int_{0}^{2\pi} e^{iky} \log\left(\frac{1+r^2}{1-r^2}-\cos(y)\right)dy & =
-\frac{1}{ik}\int_{0}^{2\pi} e^{iky} \frac{\sin(y)}{\frac{1+r^2}{1-r^2}-\cos(y)}dy \\
& =
\frac{1}{2k}\int_{0}^{2\pi} e^{i(k+1)y} \frac{1}{\frac{1+r^2}{1-r^2}-\cos(y)}dy -
\frac{1}{2k}\int_{0}^{2\pi} e^{i(k-1)y} \frac{1}{\frac{1+r^2}{1-r^2}-\cos(y)}dy \\
& = \frac{1-r^2}{2k}\left(\frac{\pi}{r}\left(\frac{1-r}{1+r}\right)^{|k+1|}-\frac{\pi}{r}\left(\frac{1-r}{1+r}\right)^{|k-1|}\right) \\
& = \frac{1-r^2}{2|k|}\frac{\pi}{r}\left(\frac{1-r}{1+r}\right)^{|k|}\left(-\frac{4r}{r^2-1}\right) = -\frac{2\pi}{|k|}\left(\frac{1-r}{1+r}\right)^{|k|}\\
\end{align*}

The case $k = 0$ is covered in \cite[Formula 4.224.9]{Gradshteyn-Ryzhik:table-integrals}.

\end{proof}

\section*{Acknowledgements}

We thank Peter Constantin for pointing out to us the question about the bifurcation from ellipses. AC, DC and JGS were partially supported by the grant MTM2014-59488-P (Spain) and ICMAT Severo Ochoa project SEV-2011-0087. AC was partially supported by the Ram\'on y Cajal program RyC-2013-14317 and ERC grant 307179-GFTIPFD. JGS was partially supported by an AMS-Simons Travel Grant.

 \bibliographystyle{abbrv}
 \bibliography{references}

% \begin{tabular}{ll}
% \textbf{Angel Castro} & \textbf{Diego C\'ordoba} \\
% {\small Instituto de Ciencias Matem\'aticas} & {\small Instituto de Ciencias Matem\'aticas}\\
% {\small Universidad Aut\'onoma de Madrid} & {\small Consejo Superior de Investigaciones Cient\'ificas}\\
% {\small C/ Nicolas Cabrera, 13-15, 28049 Madrid, Spain} & {\small C/ Nicolas Cabrera, 13-15, 28049 Madrid, Spain}\\
% {\small Email: angel\underline{  }castro@icmat.es} & {\small Email: dcg@icmat.es}\\
%    & \\
% \textbf{Javier G\'omez-Serrano} &\\
% {\small Department of Mathematics} & {\small Department of Mathematics}\\
% {\small Princeton University} & {\small Princeton University}\\
% {\small 610 Fine Hall, Washington Rd,} & {\small 804 Fine Hall, Washington Rd,}\\
% {\small Princeton, NJ 08544, USA} & {\small Princeton, NJ 08544, USA}\\
%  {\small Email: jg27@math.princeton.edu} & {\small Email: dcg@math.princeton.edu}
%   \\
% 
% \end{tabular}

\begin{tabular}{l}
\textbf{Angel Castro}   \\
{\small Departamento de Matem\'aticas}  \\
{\small Facultad de Ciencias}  \\
{\small Universidad Aut\'onoma de Madrid}  \\
{\small Campus Cantoblanco UAM, 28049 Madrid}  \\
 \\
\\
{\small Instituto de Ciencias Matem\'aticas, CSIC-UAM-UC3M-UCM}  \\
{\small C/ Nicol\'as Cabrera 13-15}  \\
{\small Campus Cantoblanco UAM, 28049 Madrid}  \\
{\small Email: angel\_castro@icmat.es}  \\
\\
  \textbf{Diego C\'ordoba} \\
  {\small Instituto de Ciencias Matem\'aticas} \\
 {\small Consejo Superior de Investigaciones Cient\'ificas} \\
 {\small C/ Nicolas Cabrera, 13-15, 28049 Madrid, Spain} \\
  {\small Email: dcg@icmat.es} \\
\\
 {\small Department of Mathematics} \\
 {\small Princeton University} \\
 {\small 804 Fine Hall, Washington Rd,} \\
  {\small Princeton, NJ 08544, USA} \\
 {\small Email: dcg@math.princeton.edu} \\
\\
\textbf{Javier G\'omez-Serrano} \\
{\small Department of Mathematics} \\
{\small Princeton University}\\
{\small 610 Fine Hall, Washington Rd,}\\
{\small Princeton, NJ 08544, USA}\\
 {\small Email: jg27@math.princeton.edu}
  \\

\end{tabular}

\end{document}